\documentclass{amsart}

\usepackage{amssymb,amsmath,stmaryrd,mathrsfs}
\def\definetac{\newif\iftac}    
\ifx\tactrue\undefined
  \definetac
  \ifx\state\undefined\tacfalse\else\tactrue\fi
\fi
\iftac\else\usepackage{amsthm}\fi
\usepackage[all,2cell]{xy}
\UseAllTwocells
\usepackage{enumitem}
\usepackage{xcolor}
\definecolor{darkgreen}{rgb}{0,0.45,0} 
\usepackage[pagebackref,colorlinks,citecolor=darkgreen,linkcolor=darkgreen]{hyperref}
\usepackage{mathtools}          
\usepackage{tikz}
\usepackage{braket}             

\usepackage{url}                
\usepackage{xspace}             

\makeatletter
\let\ea\expandafter

\def\mdef#1#2{\ea\ea\ea\gdef\ea\ea\noexpand#1\ea{\ea\ensuremath\ea{#2}\xspace}}
\def\alwaysmath#1{\ea\ea\ea\global\ea\ea\ea\let\ea\ea\csname your@#1\endcsname\csname #1\endcsname
  \ea\def\csname #1\endcsname{\ensuremath{\csname your@#1\endcsname}\xspace}}

\DeclareRobustCommand\widecheck[1]{{\mathpalette\@widecheck{#1}}}
\def\@widecheck#1#2{%
    \setbox\z@\hbox{\m@th$#1#2$}%
    \setbox\tw@\hbox{\m@th$#1%
       \widehat{%
          \vrule\@width\z@\@height\ht\z@
          \vrule\@height\z@\@width\wd\z@}$}%
    \dp\tw@-\ht\z@
    \@tempdima\ht\z@ \advance\@tempdima2\ht\tw@ \divide\@tempdima\thr@@
    \setbox\tw@\hbox{%
       \raise\@tempdima\hbox{\scalebox{1}[-1]{\lower\@tempdima\box
\tw@}}}%
    {\ooalign{\box\tw@ \cr \box\z@}}}


\newcount\foreachcount

\def\foreachletter#1#2#3{\foreachcount=#1
  \ea\loop\ea\ea\ea#3\@alph\foreachcount
  \advance\foreachcount by 1
  \ifnum\foreachcount<#2\repeat}

\def\foreachLetter#1#2#3{\foreachcount=#1
  \ea\loop\ea\ea\ea#3\@Alph\foreachcount
  \advance\foreachcount by 1
  \ifnum\foreachcount<#2\repeat}

\def\definescr#1{\ea\gdef\csname s#1\endcsname{\ensuremath{\mathscr{#1}}\xspace}}
\foreachLetter{1}{27}{\definescr}
\def\definecal#1{\ea\gdef\csname c#1\endcsname{\ensuremath{\mathcal{#1}}\xspace}}
\foreachLetter{1}{27}{\definecal}
\def\definebold#1{\ea\gdef\csname b#1\endcsname{\ensuremath{\mathbf{#1}}\xspace}}
\foreachLetter{1}{27}{\definebold}
\def\definebb#1{\ea\gdef\csname l#1\endcsname{\ensuremath{\mathbb{#1}}\xspace}}
\foreachLetter{1}{27}{\definebb}
\def\definefrak#1{\ea\gdef\csname f#1\endcsname{\ensuremath{\mathfrak{#1}}\xspace}}
\foreachletter{1}{9}{\definefrak} 
\foreachletter{10}{27}{\definefrak}
\def\definebar#1{\ea\gdef\csname #1bar\endcsname{\ensuremath{\overline{#1}}\xspace}}
\foreachLetter{1}{27}{\definebar}
\foreachletter{1}{8}{\definebar} 
\foreachletter{9}{15}{\definebar} 
\foreachletter{16}{27}{\definebar}
\def\definetil#1{\ea\gdef\csname #1til\endcsname{\ensuremath{\widetilde{#1}}\xspace}}
\foreachLetter{1}{27}{\definetil}
\foreachletter{1}{27}{\definetil}
\def\definehat#1{\ea\gdef\csname #1hat\endcsname{\ensuremath{\widehat{#1}}\xspace}}
\foreachLetter{1}{27}{\definehat}
\foreachletter{1}{27}{\definehat}
\def\definechk#1{\ea\gdef\csname #1chk\endcsname{\ensuremath{\widecheck{#1}}\xspace}}
\foreachLetter{1}{27}{\definechk}
\foreachletter{1}{27}{\definechk}
\def\defineul#1{\ea\gdef\csname u#1\endcsname{\ensuremath{\underline{#1}}\xspace}}
\foreachLetter{1}{27}{\defineul}
\foreachletter{1}{27}{\defineul}

\def\autofmt@n#1\autofmt@end{\mathrm{#1}}
\def\autofmt@b#1\autofmt@end{\mathbf{#1}}
\def\autofmt@l#1#2\autofmt@end{\mathbb{#1}\mathsf{#2}}
\def\autofmt@c#1#2\autofmt@end{\mathcal{#1}\mathit{#2}}
\def\autofmt@s#1#2\autofmt@end{\mathscr{#1}\mathit{#2}}
\def\autofmt@f#1\autofmt@end{\mathsf{#1}}
\def\autofmt@u#1\autofmt@end{\underline{\smash{\mathsf{#1}}}}
\def\autofmt@U#1\autofmt@end{\underline{\underline{\smash{\mathsf{#1}}}}}
\def\autofmt@h#1\autofmt@end{\widehat{#1}}
\def\autofmt@r#1\autofmt@end{\overline{#1}}
\def\autofmt@t#1\autofmt@end{\widetilde{#1}}
\def\autofmt@k#1\autofmt@end{\check{#1}}

\def\auto@drop#1{}
\def\autodef#1{\ea\ea\ea\@autodef\ea\ea\ea#1\ea\auto@drop\string#1\autodef@end}
\def\@autodef#1#2#3\autodef@end{%
  \ea\def\ea#1\ea{\ea\ensuremath\ea{\csname autofmt@#2\endcsname#3\autofmt@end}\xspace}}
\def\autodefs@end{blarg!}
\def\autodefs#1{\@autodefs#1\autodefs@end}
\def\@autodefs#1{\ifx#1\autodefs@end%
  \def\autodefs@next{}%
  \else%
  \def\autodefs@next{\autodef#1\@autodefs}%
  \fi\autodefs@next}


\DeclareSymbolFont{bbold}{U}{bbold}{m}{n}
\DeclareSymbolFontAlphabet{\mathbbb}{bbold}

\newcommand{\bbone}{\ensuremath{\mathbbb{1}}\xspace}




\mdef\delbar{\overline{\partial}}

\mdef\hf{\textstyle\frac12 }
\mdef\thrd{\textstyle\frac13 }
\mdef\qtr{\textstyle\frac14 }

\SelectTips{cm}{}
\newdir{ >}{{}*!/-10pt/@{>}}    
\newcommand{\pushoutcorner}[1][dr]{\save*!/#1+1.2pc/#1:(1,-1)@^{|-}\restore}
\newcommand{\pullbackcorner}[1][dr]{\save*!/#1-1.2pc/#1:(-1,1)@^{|-}\restore}

\mdef\Id{\mathrm{Id}}
\mdef\id{\mathrm{id}}
\alwaysmath{ell}
\alwaysmath{infty}
\alwaysmath{odot}
\def\frc#1/#2.{\frac{#1}{#2}}   
\mdef\ten{\mathrel{\otimes}}

\mdef\sqten{\mathrel{\boxtimes}}

\DeclareRobustCommand\widecheck[1]{{\mathpalette\@widecheck{#1}}}
\def\@widecheck#1#2{%
    \setbox\z@\hbox{\m@th$#1#2$}%
    \setbox\tw@\hbox{\m@th$#1%
       \widehat{%
          \vrule\@width\z@\@height\ht\z@
          \vrule\@height\z@\@width\wd\z@}$}%
    \dp\tw@-\ht\z@
    \@tempdima\ht\z@ \advance\@tempdima2\ht\tw@ \divide\@tempdima\thr@@
    \setbox\tw@\hbox{%
       \raise\@tempdima\hbox{\scalebox{1}[-1]{\lower\@tempdima\box
\tw@}}}%
    {\ooalign{\box\tw@ \cr \box\z@}}}


\DeclareMathOperator\colim{colim}


\mdef\we{\overset{\sim}{\longrightarrow}}
\mdef\leftwe{\overset{\sim}{\longleftarrow}}



\let\xto\xrightarrow

\def\rightarrowtailfill@{\arrowfill@{\Yright\joinrel\relbar}\relbar\rightarrow}
\newcommand\xrightarrowtail[2][]{\ext@arrow 0055{\rightarrowtailfill@}{#1}{#2}}

\def\twoheadrightarrowfill@{\arrowfill@{\relbar\joinrel\relbar}\relbar\twoheadrightarrow}
\newcommand\xtwoheadrightarrow[2][]{\ext@arrow 0055{\twoheadrightarrowfill@}{#1}{#2}}


\def\slashedarrowfill@#1#2#3#4#5{%
  $\m@th\thickmuskip0mu\medmuskip\thickmuskip\thinmuskip\thickmuskip
   \relax#5#1\mkern-7mu%
   \cleaders\hbox{$#5\mkern-2mu#2\mkern-2mu$}\hfill
   \mathclap{#3}\mathclap{#2}%
   \cleaders\hbox{$#5\mkern-2mu#2\mkern-2mu$}\hfill
   \mkern-7mu#4$%
}
\def\rightslashedarrowfill@{%
  \slashedarrowfill@\relbar\relbar\mapstochar\rightarrow}
\newcommand\xslashedrightarrow[2][]{%
  \ext@arrow 0055{\rightslashedarrowfill@}{#1}{#2}}
\mdef\hto{\xslashedrightarrow{}}
\mdef\htoo{\xslashedrightarrow{\quad}}




\def\toiso{\xto{\smash{\raisebox{-.5mm}{$\scriptstyle\sim$}}}}


\long\def\my@drawfill#1#2;{%
\@skipfalse
\fill[#1,draw=none] #2;
\@skiptrue
\draw[#1,fill=none] #2;
}
\newif\if@skip
\newcommand{\skipit}[1]{\if@skip\else#1\fi}
\newcommand{\drawfill}[1][]{\my@drawfill{#1}}



\newif\ifhyperref
\@ifpackageloaded{hyperref}{\hyperreftrue}{\hyperreffalse}
\iftac
  \let\your@state\state
  \def\state#1{\gdef\currthmtype{#1}\your@state{#1}}
  \let\your@staterm\staterm
  \def\staterm#1{\gdef\currthmtype{#1}\your@staterm{#1}}
  \let\defthm\newtheorem
  \def\currthmtype{}
  \ifhyperref
    \def\autoref#1{\ref*{label@name@#1}~\ref{#1}}
  \else
    \def\autoref#1{\ref{label@name@#1}~\ref{#1}}
  \fi
  \AtBeginDocument{%
    \let\old@label\label%
    \def\label#1{%
      {\let\your@currentlabel\@currentlabel%
        \edef\@currentlabel{\currthmtype}%
        \old@label{label@name@#1}}%
      \old@label{#1}}
  }
\else
  \ifhyperref
    \def\defthm#1#2{%
      \newtheorem{#1}{#2}[section]%
      \expandafter\def\csname #1autorefname\endcsname{#2}%
      \expandafter\let\csname c@#1\endcsname\c@thm}
  \else
    \def\defthm#1#2{\newtheorem{#1}[thm]{#2}}
    \ifx\SK@label\undefined\let\SK@label\label\fi
    \let\old@label\label
    \let\your@thm\@thm
    \def\@thm#1#2#3{\gdef\currthmtype{#3}\your@thm{#1}{#2}{#3}}
    \def\currthmtype{}
    \def\label#1{{\let\your@currentlabel\@currentlabel\def\@currentlabel%
        {\currthmtype~\your@currentlabel}%
        \SK@label{#1@}}\old@label{#1}}
    \def\autoref#1{\ref{#1@}}
  \fi
\fi

\newtheorem{thm}{Theorem}[section]

\defthm{cor}{Corollary}
\defthm{prop}{Proposition}
\defthm{lem}{Lemma}
\defthm{sch}{Scholium}
\defthm{assume}{Assumption}
\defthm{claim}{Claim}
\defthm{conj}{Conjecture}
\defthm{hyp}{Hypothesis}
\defthm{fact}{Fact}
\iftac\theoremstyle{plain}\else\theoremstyle{definition}\fi
\defthm{defn}{Definition}
\defthm{notn}{Notation}
\iftac\theoremstyle{plain}\else\theoremstyle{remark}\fi
\defthm{rmk}{Remark}
\defthm{eg}{Example}
\defthm{egs}{Examples}
\defthm{ex}{Exercise}
\defthm{ceg}{Counterexample}
\defthm{con}{Construction}
\defthm{warn}{Warning}

\def\thmqedhere{\expandafter\csname\csname @currenvir\endcsname @qed\endcsname}


\setitemize[1]{leftmargin=2em}
\setenumerate[1]{leftmargin=*}

\iftac
  \let\c@equation\c@subsection
\else
  \let\c@equation\c@thm
\fi
\numberwithin{equation}{section}

\@ifpackageloaded{mathtools}{\mathtoolsset{showonlyrefs,showmanualtags}}{}

\alwaysmath{alpha}
\alwaysmath{beta}
\alwaysmath{gamma}
\alwaysmath{Gamma}
\alwaysmath{delta}
\alwaysmath{Delta}
\alwaysmath{epsilon}
\mdef\ep{\varepsilon}
\alwaysmath{zeta}
\alwaysmath{eta}
\alwaysmath{theta}
\alwaysmath{Theta}
\alwaysmath{iota}
\alwaysmath{kappa}
\alwaysmath{lambda}
\alwaysmath{Lambda}
\alwaysmath{mu}
\alwaysmath{nu}
\alwaysmath{xi}
\alwaysmath{pi}
\alwaysmath{rho}
\alwaysmath{sigma}
\alwaysmath{Sigma}
\alwaysmath{tau}
\alwaysmath{upsilon}
\alwaysmath{Upsilon}
\alwaysmath{phi}
\alwaysmath{Pi}
\alwaysmath{Phi}
\mdef\ph{\varphi}
\alwaysmath{chi}
\alwaysmath{psi}
\alwaysmath{Psi}
\alwaysmath{omega}
\alwaysmath{Omega}

\makeatother


\tikzset{lab/.style={auto,font=\scriptsize}} 
\usetikzlibrary{arrows}

\usepackage{xcolor}
\usepackage{fixme}
\fxsetup{
    status=draft,
    author=,
    layout=margin,
    theme=color
}

\definecolor{fxnote}{rgb}{1.0000,0.0000,0.0000}
\colorlet{fxnotebg}{yellow}




\newcommand{\D}{\sD}

\autodefs{\cDER\cCat\cGrp\cCAT\cMONCAT\cTriaCAT\cAddCAT\cPreCAT\cPtCAT\bSet\cProf\bCat
\cPro\cMONDER\cBICAT\ncomp\nid\ncoker\niso\ndia\nIm\sProf\ncyl\fC\fZ\fT\nhocolim\nholim\cPsNat\ncoll\nCh\bsSet\cAlg\cIm\cI\cP}

\hyphenation{pre-deriv-ator}
\hyphenation{pre-deriv-ators}
\hyphenation{co-refl-ect-ive}
\hyphenation{auto-mor-ph-ism}

\let\oldboxtimes\boxtimes
\def\boxtimes{\mathrel{\oldboxtimes}}

\newcommand{\fib}{\mathsf{fib}}
\newcommand{\cof}{\mathsf{cof}}

\def\ccsub{_{\mathrm{cc}}}
\def\pdh(#1,#2){\llbracket #1,#2\rrbracket}
\def\ldh(#1,#2){\llbracket #1,#2\rrbracket\ccsub}
\def\pend(#1){\pdh(#1,#1)}
\def\lend(#1){\ldh(#1,#1)}

\def\DTl#1#2#3#4#5#6#7{%
  \xymatrix@C=3pc{{#1} \ar[r]^-{#2} &
    {#3} \ar[r]^-{#4} &
    {#5} \ar[r]^-{#6} &
    {#7}
  }}

\newsavebox{\tvabox}
\savebox\tvabox{\hspace{1mm}\begin{tikzpicture}[>=latex',baseline={(0,-.18)}]
  \draw[->] (0,.1) -- +(1,0);
  \node at (.5,0) {$\scriptscriptstyle\bot$};
  \draw[->] (1,-.1) -- +(-1,0);
  \draw[->] (1,-.2) -- +(-1,0);
\end{tikzpicture}\hspace{1mm}}

\newcommand{\tcof}{\mathsf{tcof}}
\newcommand{\tfib}{\mathsf{tfib}}
\newcommand{\cok}{\mathrm{cok}}

\newtheorem*{thm*}{\textbf{Theorem}}

\title{Characterizations of abstract stable homotopy theories}

\author{Moritz Groth}
\address{Rheinische Friedrich-Wilhelms-Universit{\"a}t Bonn, Mathematisches Institut, Ende-nicher Allee 60, 53115 Bonn, Germany}
\email{mgroth@math.uni-bonn.de}

\date{\today}

\begin{document}

\begin{abstract}
In this paper we establish new characterizations of stable derivators, thereby obtaining additional interpretations of the passage from (pointed) topological spaces to spectra and, more generally, of the stabilization. We show that a derivator is stable if and only if homotopy finite limits and homotopy finite colimits commute, and there are variants for sufficiently finite Kan extensions. As an additional reformulation, a derivator is stable if and only if it admits a zero object and if partial cone and partial fiber morphisms commute on squares.
\end{abstract}

\maketitle

\tableofcontents

\section{Introduction}
\label{sec:intro}

The classical paradigm of algebraic topology consists of studying topological spaces through algebraic invariants. Such invariants include ordinary cohomology theories, various flavors of topological $K$-theory and cobordism and other generalized cohomology theories such as stable homotopy groups (despite the latter often being too hard to calculate). By the classical Brown representability theorem such generalized cohomology theories are represented by \emph{spectra}, and a good part of modern algebraic topology consists of studying spectra or even the totality of all spectra.

A close connection between topological spaces and spectra is provided by the construction of suspension spectra, $X\mapsto\Sigma^\infty_+X$. In more detail, this construction factors into two intermediate steps,
\[
(\Sigma^\infty_+,\Omega^\infty_-)\colon\mathrm{Top}\rightleftarrows\mathrm{Top}_\ast\rightleftarrows\mathrm{Sp},
\]
where we denote by $\mathrm{Top},\mathrm{Top}_\ast,$ and $\mathrm{Sp}$ the homotopy theories of topological spaces, pointed topological spaces, and spectra, respectively. The first step simply adds a disjoint basepoint while the second step forms the suspension spectrum of a pointed topological space. (For a refined picture of this stabilization see \cite{ggn:infinite} and this will be revisited in \cite{gs:enriched}.) 

From a more abstract perspective, each of these two steps improves universally certain \emph{exactness properties} of the homotopy theory of (pointed) topological spaces. In the first step we pass in a universal way from a general homotopy theory to a \emph{pointed} homotopy theory, i.e., a homotopy theory admitting a zero object. The second step realizes the universal passage from a pointed homotopy theory to a \emph{stable} homotopy theory, i.e., to a pointed homotopy theory in which homotopy pushouts and homotopy pullbacks coincide. With these results in mind, our main goal in this paper is to collect additional answers to the following question.

\vspace{2mm}
\textbf{Question:} Which exactness properties of the homotopy theory of spectra already \emph{characterize} the passage from (pointed) topological spaces to spectra? To put it differently, starting with the homotopy theory of (pointed) topological spaces, for which exactness properties is it true that if one imposes these properties in a universal way then the outcome is the homotopy theory of spectra?
\vspace{2mm}

In order to make this question more specific, we stick to a precise definition of an abstract homotopy theory and here we choose to work with derivators. (However, using similar arguments, the results in this paper can also be obtained if one works with $\infty$-categories instead.) For the remainder of this introduction it suffices to know that derivators provide some framework for the calculus of homotopy limits, homotopy colimits, and homotopy Kan extensions as it is available in typical situations arising in homological algebra and abstract homotopy theory (but see, for instance, \cite{groth:intro-to-der-1} for more details).

A derivator is by definition \emph{stable} if it admits a zero object and if the classes of pullback squares and pushout squares coincide. Typical examples are given by derivators of unbounded chain complexes in Grothendieck abelian categories (like derivators associated to fields, rings, or schemes), and homotopy derivators of stable model categories or stable $\infty$-categories (see \cite[\S5]{gst:basic} for many explicit examples). The universal example of a stable derivator is given by the derivator of spectra, and this derivator is obtained by stabilizing the derivator of spaces \cite{heller:stable}. 

It is known that stability can be reformulated by asking that the derivator is pointed, i.e., that there is a zero object, and that the suspension-loop adjunction or the cofiber-fiber adjunction is an equivalence \cite{gps:mayer}. An additional characterization of stability was established in \cite{gst:basic}, namely as pointed derivators in which the classes of strongly cartesian $n$-cubes (in the sense of Goodwillie \cite{goodwillie:II}) and strongly cocartesian $n$-cubes agree for all $n\geq 2$. 

In this paper we characterize stable derivators as precisely those derivators in which homotopy finite limits and homotopy finite colimits commute. (Let us recall that a category is homotopy finite if it is equivalent to a category which is finite, skeletal, and has no non-trivial endomorphisms, i.e., to a category whose nerve is a finite simplicial set.) Since Kan extensions in derivators are calculated pointwise, these characterizations admit various improvements in terms of the commutativity of sufficiently finite Kan extensions (in the sense of \cite[\S9]{groth:can-can}). 

An additional reformulation can be obtained in terms of partial cone and partial fiber morphisms acting on squares in pointed derivators. In more detail, since a square can be read as a morphism of morphisms, we can form the cone in one direction and the fiber in the other direction, and a pointed derivator is stable if and only if these two operations commute. (In contrast, the formations of cones in the two different directions commute in every pointed derivator and these compositions are shown to agree with the total cofiber construction; see \S\ref{sec:tcof}.)  As a summary, the following are the main characterizations established as \autoref{thm:stable-lim-III}.

\begin{thm*}
The following are equivalent for a derivator \D.
\begin{enumerate}
\item The derivator \D is stable.
\item The derivator \D is pointed and the cone morphism $C\colon\D^{[1]}\to\D$ preserves fibers. (Here, $\D^{[1]}$ denotes the derivator of morphisms in \D.)
\item Homotopy finite colimits and homotopy finite limits commute in \D.
\item Left homotopy finite left Kan extensions commute with arbitrary right Kan extensions in \D.
\item Arbitrary left Kan extensions commute with right homotopy finite right Kan extensions in \D. 
\end{enumerate}
\end{thm*}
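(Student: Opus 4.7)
The plan is to establish the equivalences via the cycle of implications $(i) \Rightarrow (iv),(v) \Rightarrow (iii) \Rightarrow (ii) \Rightarrow (i)$. Two of these are essentially formal: the implications $(iv) \Rightarrow (iii)$ and $(v) \Rightarrow (iii)$ follow immediately since homotopy finite colimits and limits are the homotopy finite left and right Kan extensions along the projection $K \to \bbone$, so (iii) is a special case of the stronger commutativities of (iv) and (v).

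For $(i) \Rightarrow (iv)$ and $(i) \Rightarrow (v)$, I would exploit the fact that in a stable derivator every homotopy finite left Kan extension is naturally equivalent, up to a canonical shift, to the corresponding right Kan extension. This generalizes the familiar equivalence $\cof \simeq \Sigma \circ \fib$ and should be derivable from the total cofiber/total fiber equivalence established in \S\ref{sec:tcof}. Under this identification, condition (iv) reduces to the commutativity of right Kan extensions with arbitrary right Kan extensions (Fubini for limits), which holds in every derivator, and condition (v) follows by the dual argument.

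The heart of the argument is the implication $(iii) \Rightarrow (ii)$. I would proceed in two steps. First, deduce that $\D$ is pointed by evaluating the interchange map of (iii) on cleverly chosen small finite diagrams involving the initial and terminal objects (for instance, constant diagrams on two-element discrete categories combined with appropriate auxiliary shapes), which after unwinding forces the identification of initial and terminal objects. Second, once $\D$ is pointed, the cone morphism $C \colon \D^{[1]} \to \D$ and the formation of fibers are both computable as homotopy finite colimits and limits via pushout and pullback squares with the zero object; since a morphism in $\D^{[1]}$ is a $[1] \times [1]$-diagram in $\D$, applying (iii) to a suitably constructed bidiagram built from the pushout shape for cones and the pullback shape for fibers yields precisely that $C$ preserves fibers.

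Finally, $(ii) \Rightarrow (i)$ follows from the characterizations of stability already cited in the introduction, in particular \cite{gps:mayer}: once $\D$ is pointed and cones commute with fibers on squares, the cofiber-fiber adjunction is an equivalence, which is equivalent to stability. The principal obstacle is the implication $(iii) \Rightarrow (ii)$: both the extraction of pointedness from an abstract commutativity statement and the careful setup of the finite bidiagram that translates commutativity of finite limits with finite colimits into the precise cone-fiber commutativity on squares.
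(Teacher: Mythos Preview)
Your overall cycle $(i)\Rightarrow(iv),(v)\Rightarrow(iii)\Rightarrow(ii)\Rightarrow(i)$ matches the paper's, and the implications $(iv),(v)\Rightarrow(iii)$ are indeed formal. However, two of your remaining steps have genuine gaps.

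\textbf{The implication $(ii)\Rightarrow(i)$.} You assert that once $C$ preserves fibers the cofiber--fiber adjunction is an equivalence, citing \cite{gps:mayer}. But \cite{gps:mayer} contains no such implication; it only records that stability is \emph{equivalent} to $(\cof,\fib)$ being an equivalence. There is no direct passage from ``$C$ preserves $F$'' to ``$\cof\circ\fib\cong\id$ and $\fib\circ\cof\cong\id$''. The paper's argument is quite different and more concrete: one evaluates the canonical comparison map $C(F_2X)\to F(C_1X)$ on the specific square $X=(i_\lrcorner)_!\pi_\lrcorner^\ast x$, which looks like $\begin{smallmatrix}0&\to&x\\ \downarrow&&\downarrow\\ x&\to&x\end{smallmatrix}$. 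A short computation yields $CF_2(X)\cong\Sigma\Omega x$ and $FC_1(X)\cong x$, so the comparison map becomes the counit $\Sigma\Omega x\to x$. A second evaluation on the dual square gives $x\toiso\Omega\Sigma x$, and one concludes via the known characterization that $(\Sigma,\Omega)$ being an equivalence forces stability. Without this explicit evaluation (or something equivalent), your step $(ii)\Rightarrow(i)$ does not go through.

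\textbf{The implication $(i)\Rightarrow(iv),(v)$.} Your proposed route---identifying an arbitrary homotopy finite left Kan extension with a shifted right Kan extension and then invoking Fubini for limits---is not supported by \S\ref{sec:tcof}, which only treats squares. Extending this identification to arbitrary homotopy finite shapes is itself a substantial theorem, not a corollary of the total cofiber discussion. The paper instead argues via exactness: in a stable derivator every left exact morphism between shifted derivators is automatically right exact, and right exact morphisms preserve left homotopy finite left Kan extensions (both facts cited from \cite{groth:can-can}). Since right Kan extension morphisms are right adjoints and hence left exact, they are right exact in the stable case and therefore commute with left homotopy finite left Kan extensions. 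This avoids any global ``shift'' formula.

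\textbf{A minor point on $(iii)\Rightarrow\text{pointed}$.} No clever diagram is needed: the empty category is homotopy finite, so (iii) in particular says that empty colimits commute with empty limits, which is exactly the statement that initial and terminal objects agree (\autoref{prop:ptd-comm}). The remainder of your sketch for $(iii)\Rightarrow(ii)$ is essentially correct and matches the paper's \autoref{lem:lim-comm}: decompose $C$ as $1^\ast\circ(i_\ulcorner)_!\circ i_\ast$, note that the outer two morphisms are right adjoints, and that $(i_\ulcorner)_!$ is a homotopy finite colimit which by hypothesis commutes with homotopy finite limits.
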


Since the derivator of spectra is the stabilization of the derivator of spaces, these abstract characterizations of stability  specialize to answers to the above question. For concreteness, there are the following answers while additional, closely related ones can be found in \S\ref{sec:char}.

\vspace{2mm}
\textbf{Answer:} The derivator of spectra is obtained from the derivator of spaces if one forces homotopy finite limits and homotopy finite colimits to commute in a universal way. Similarly, the derivator of spectra is obtained from the derivator of pointed spaces if one forces partial cones and partial fibers on squares to commute in a universal way.
\vspace{2mm}

This paper belongs to a project aiming for an abstract study of stability, and the paper can be thought of as a sequel to \cite{groth:ptstab,gps:mayer,groth:can-can} and as a prequel to \cite{groth:formal}. This abstract study of stability was developed in a different direction in the series of papers on abstract representation theory \cite{gst:basic,gst:tree,gst:Dynkin-A,gst:acyclic} which will be continued in \cite{gst:acyclic-Serre}. The perspective from enriched derivator theory offers additional characterizations of stability, and these together with a more systematic study of the stabilization will appear in \cite{gs:enriched}.

The content of the sections is as follows. In \S\ref{sec:tcof} we study partial cones, iterated cones, and total cofibers in pointed derivators and show the latter two to be canonically isomorphic. In \S\ref{sec:char} we characterize pointed and stable derivators by the commutativity of certain (co)limits or Kan extensions. In \S\ref{sec:fun} we obtain additional characterizations in terms of iterated adjoints to constant morphism morphisms.
\vspace{2mm}

\textbf{Prerequisites.} We assume \emph{basic} acquaintance with the language of derivators, which were introduced independently by Grothendieck~\cite{grothendieck:derivators}, Heller~\cite{heller:htpythies}, and Franke~\cite{franke:adams}. Derivators were developed further by various mathematicians including Maltsiniotis \cite{maltsiniotis:seminar,maltsiniotis:k-theory,maltsiniotis:htpy-exact} and Cisinski \cite{cisinski:direct,cisinski:loc-min,cisinski:derived-kan} (see \cite{grothendieck:derivators} for many additional references). Here we stick to the notation and conventions from \cite{gps:mayer}. For a more detailed account of the basics we refer to \cite{groth:intro-to-der-1}.

\section{Partial cones, iterated cones, and total cofibers}
\label{sec:tcof}

In this section we define total cofibers and iterated cones of squares in pointed derivators, and show these constructions to be naturally isomorphic. These and related results belong to a fairly rich calculus of squares, cubes, and hypercubes in pointed derivators, which we intend to come back to elsewhere. 

We refer the reader to \cite[\S2]{groth:can-can} for the construction of canonical comparison maps between cocones and colimiting cocones. In the special case in which we start with $\ulcorner=\square-\{(1,1)\}$ the full subcategory of the square obtained by removing the final object, one is lead to consider the category $P=\square^\rhd$. This is the cocone on the square obtained by adjoining a new terminal object $\infty$,
\[
\xymatrix{
(0,0)\ar[r]\ar[d]&(1,0)\ar[d]&\\
(0,1)\ar[r]&(1,1)\ar[dr]&\\
&&\infty.
}
\]
Associated to this category there are the fully faithful inclusions of the source and target square 
\begin{equation}\label{eq:t-s-squares}
s=s_\ulcorner\colon\square\to P=\square^\rhd\qquad\text{and}\qquad t=t_\ulcorner\colon\square\to P=\square^\rhd,
\end{equation}
and this datum corepresents morphisms of cocones on spans.

\begin{prop}\label{prop:cocart-cocone}
Let \D be a derivator and let $s,t\colon\square\to P=\square^\rhd$ be the inclusions of the source and target squares.
\begin{enumerate}
\item The morphism $t_!\colon\D^\square\to\D^P$ is fully faithful and $Y\in\D^P$ lies in the essential image of~$t_!$ if and only if the source square $s^\ast Y$ is cocartesian.
\item A square $X\in\D^\square$ is cocartesian if and only if the following canonical comparison map is an isomorphism,
\begin{equation}\label{eq:comp-map}
\mathrm{can}=\mathrm{can}(X)\colon t_!(X)_{1,1}\to t_!(X)_\infty.
\end{equation}
\end{enumerate}
\end{prop}
\begin{proof}
This is a special case of \cite[Prop.~3.11]{groth:can-can} and \cite[Prop.~3.14]{groth:can-can}.
\end{proof}

\begin{defn}\label{defn:tcof}
Let \D be a pointed derivator. The \textbf{total cofiber} of $X\in\D^\square$ is the cone of the comparison map~\eqref{eq:comp-map}. In formulas we set
\[
\tcof(X)=C(\mathrm{can}(X))\in\D.
\]
\end{defn}

The definition of the \textbf{total fiber} $\tfib(X)\in\D$ is dual. It is immediate from the above construction that there are morphisms of derivators
\[
\tcof\colon\D^\square\to\D\qquad\text{and}\qquad\tfib\colon\D^\square\to\D.
\]

\begin{lem}\label{lem:tcof-adjoint}
In every pointed derivator \D the morphism $\tcof\colon\D^\square\to\D$ is a left adjoint and the functor $\tfib\colon\D^\square\to\D$ is a right adjoint.
\end{lem}
\begin{proof}
Let $j\colon[1]=(0<1)\to P=\square^\rhd$ be the functor classifying the morphism $(1,1)\to \infty$. It follows from the definition of the total cofiber (\autoref{defn:tcof}) that $\mathsf{tcof}$ is the composition
\begin{equation}\label{eq:tcof-defn}
\D^\square\stackrel{t_!}{\to}\D^P\stackrel{j^\ast}{\to}\D^{[1]}\stackrel{\cof}{\to}\D^{[1]}\stackrel{1^\ast}{\to}\D.
\end{equation}
The morphisms $t_!,j^\ast,$ and $1^\ast$ are obviously left adjoint functors as is $\cof$  (\cite[Prop.~3.20]{groth:ptstab}).
\end{proof}

\begin{rmk}
Composing the respective right adjoints to each of the morphisms in \eqref{eq:tcof-defn}, one checks that a right adjoint to $\tcof$ is given by $(1,1)_!\colon\D\to\D^\square$, i.e., by the left extension by zero morphism \cite[Prop.~3.6]{groth:ptstab} which sends $x\in\D$ to a coherent square looking like 
\[
\xymatrix{
0\ar[r]\ar[d]&0\ar[d]\\
0\ar[r]&x.
}
\]
The adjunction
\[
(\tcof,(1,1)_!)\colon\D^\square\rightleftarrows\D
\]
exhibits $\tcof$ as an \emph{exceptional inverse image morphism} \cite[\S3.1]{groth:ptstab}. Alternatively, this also follows from the explicit formulas for exceptional inverse images from \emph{loc.~cit.} together with a formula for colimits of punctured cubes.
\end{rmk}

The total cofiber is an obstruction against a square being cocartesian.

\begin{cor}\label{cor:cocart-tcof-zero}
Let \D be a pointed derivator and let $X\in\D^\square$. If $X$ is cocartesian, then $\tcof(X)\cong 0$.
\end{cor}
\begin{proof}
By \autoref{prop:cocart-cocone} a square $X\in\D^\square$ is cocartesian if and only if the canonical morphism $\mathrm{can}(X)\in\D^{[1]}$ is an isomorphism. Since cones of isomorphisms are trivial \cite[Prop.~3.12]{groth:ptstab}, the claim follows from \autoref{defn:tcof}.
\end{proof}

We now turn to iterated cone constructions and show them to be naturally isomorphic to total cofibers. Let \D be a pointed derivator and let $X\in\D^\square$ be a coherent square looking like
\begin{equation}\label{eq:square-appl}
\vcenter{
\xymatrix{
x\ar[r]^-f\ar[d]_-{g}&y\ar[d]^-{g'}\\
x'\ar[r]_-{f'}&y'.
}
}
\end{equation}
Reading such a square as a morphism of morphisms in two different ways, we can form the following two `partial cofiber cubes'.

\begin{con}\label{con:cone-1-2}
Let \D be a pointed derivator and let us consider the morphism which sends a coherent morphism to its cofiber square. In more detail, denoting by $i\colon[1]\to\ulcorner$ the functor classifying the horizontal morphism $(0,0)\to(1,0)$, we consider the morphism of derivators
\[
\D^{[1]}\stackrel{i_\ast}{\to}\D^\ulcorner\stackrel{(i_\ulcorner)_!}{\to}\D^\square.
\]
Forming such cofiber squares in the first or in the second coordinate, we obtain morphisms
\[
c_1\colon\D^{[1]\times[1]}\to\D^{\square\times[1]}\qquad\text{and}\qquad c_2\colon\D^{[1]\times[1]}\to\D^{[1]\times\square},
\]
which send a coherent square $X\in\D^\square$ looking like \eqref{eq:square-appl} to coherent cubes $c_1(X)$ and $c_2(X)$ with respective underlying diagrams
\begin{equation}\label{eq:cone-1-2-cubes}
\vcenter{
\xymatrix@-1pc{
x \ar[rr]^-f \ar[dr]_-g \ar[dd] && y \ar[dr]^-{g'} \ar'[d][dd] \\
& x' \ar[rr]_-(.2){f'} \ar[dd] && y' \ar[dd]\\
0 \ar[dr] \ar'[r][rr] && Cf \ar[dr]\\
& 0 \ar[rr] && Cf',
}
}
\qquad\qquad
\vcenter{
\xymatrix@-1pc{
x \ar[rr]^-f \ar[dr]_-g \ar[dd] && y \ar[dr]^-{g'} \ar'[d][dd] \\
& x' \ar[rr]_-(.2){f'} \ar[dd] && y' \ar[dd]\\
0 \ar[dr] \ar'[r][rr] && 0 \ar[dr]\\
& Cg \ar[rr] && Cg'.
}
}
\end{equation}
In the cube $c_1(X)$ on the left the back and the front squares are cocartesian while in the cube $c_2(X)$ on the right this is the case for the left and the right squares.
In particular, associated to $X\in\D^\square$ as in~\eqref{eq:square-appl} there are coherent maps $C_1(X),C_2(X)\in\D^{[1]},$
\begin{equation}\label{eq:cone-1-2}
C_1(X)\colon Cf\to Cf'\qquad\text{and}\qquad C_2(X)\colon Cg\to Cg'.
\end{equation}
We refer to these morphisms as \textbf{partial cones} of $X$, and there are corresponding partial cone morphisms
\[
C_1,C_2\colon\D^\square\to\D^{[1]}.
\]
Composing these partial cones with the usual cone morphism $C\colon\D^{[1]}\to\D$, we obtain \textbf{iterated cone morphisms}
\[
C\circ C_1, C\circ C_2\colon\D^\square\to\D.
\]
\end{con}

\begin{prop}\label{prop:cocart-map-on-C}
Let \D be a pointed derivator and let $X\in\D^\square$. If $X$ is cocartesian, then the partial cones $C_1(X),C_2(X)\in\D^{[1]}$ are isomorphisms.
\end{prop}
\begin{proof}
In order to show that the partial cone $C_1(X)$ is an isomorphism, let us consider the defining cube on the left in \eqref{eq:cone-1-2-cubes}. By construction the back and front faces are cocartesian as is the top face by assumption on~$X$. The composition and cancellation property of cocartesian squares \cite[Prop.~3.13]{groth:ptstab} implies that also the bottom face is cocartesian. Since isomorphisms are stable under cobase change \cite[Prop.~3.12]{groth:ptstab}, the partial cone $C_1(X)\colon Cf\to Cf'$ is an isomorphism. Similar arguments show that also the partial cone $C_2(X)\colon Cg\to Cg'$ is an isomorphism.
\end{proof}

\begin{rmk}
Let $y_\cC$ be the pointed derivator represented by a complete, cocomplete, and pointed category~$\cC$. In this case \autoref{prop:cocart-map-on-C} reduces to the statement that for every pushout square in $\cC$,
\[
\xymatrix{
x\ar[r]^-f\ar[d]_-{g}&y\ar[d]^-{g'}\\
x'\ar[r]_-{f'}&y',\pushoutcorner
}
\]
the induced maps $\cok(f)\to\cok(f')$ and $\cok(g)\to\cok(g')$ are isomorphisms.
\end{rmk}

\begin{cor}\label{cor:cocart-iterated-cones}
Let \D be a pointed derivator and let $X\in\D^\square$. If $X$ is cocartesian, then there are isomorphisms $C(C_1(X))\cong 0\cong C(C_2(X))$.
\end{cor}
\begin{proof}
By \autoref{prop:cocart-map-on-C} we know that $C_1(X),C_2(X)$ are isomorphisms as soon as $X$ is cocartesian. Hence it suffices to remember that cones of isomorphisms are trivial \cite[Prop.~3.12]{groth:ptstab}.
\end{proof}

In particular, for cocartesian squares we just observed that iterated cones are isomorphic. This is true more generally.

\begin{prop}\label{prop:cones-commute}
For every pointed derivator \D the iterated cones
\[
C\circ C_1\colon\D^\square\to\D\qquad\text{and}\qquad C\circ C_2\colon\D^\square\to\D 
\]
are canonically isomorphic.
\end{prop}
\begin{proof}
As a left adjoint morphism, the cone morphism $C\colon\D^{[1]}\to\D$ is right exact and hence preserves cones \cite[\S9]{groth:can-can}. This precisely means that iterated cones are canonically isomorphic.
\end{proof}

\begin{rmk}
\begin{enumerate}
\item The calculus of hypercubes in pointed derivators yields an additional proof leading to a more precise statement. In particular, this also implies that given a square $X\in\D^\square$ in pointed derivator \D which looks like \eqref{eq:square-appl} there is a square of the form
\[
\xymatrix{
y'\ar[r]\ar[d]&Cf'\ar[d]\\
Cg'\ar[r]&\tcof(X).
}
\]
\item Passing to shifted derivators of pointed derivators, it is immediate that partial cone operations defined on hypercubes also commute. 
\item As noted in the proof, the cone morphism $C\colon\D^{[1]}\to\D$ preserve cones, and there is a dual result for fibers. However, a pointed derivator is stable if and only if the cone morphism preserves fibers; see \S\ref{sec:char}.
\end{enumerate}
\end{rmk}

Iterated cones and total cofibers vanish on cocartesian squares (\autoref{cor:cocart-tcof-zero} and \autoref{cor:cocart-iterated-cones}), and are hence naturally isomorphic on such squares. Again, this is true more generally. 

\begin{thm}\label{thm:total-cof}
Total cofibers and iterated cones in pointed derivators \D are naturally isomorphic,
\[
\tcof\cong C\circ C_1\cong C\circ C_2\colon\D^\square\to\D.
\]
\end{thm}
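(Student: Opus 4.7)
Plan:

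Since \autoref{prop:cones-commute} already gives $C \circ C_1 \cong C \circ C_2$, it suffices to prove $\tcof \cong C \circ C_1$. My strategy is to identify the right adjoints of these two left-adjoint morphisms $\D^\square \to \D$ and conclude by uniqueness of adjoints.

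By \autoref{lem:tcof-adjoint} together with the remark following it, $\tcof$ is a left adjoint whose right adjoint is the extension-by-zero morphism $(1,1)_! \colon \D \to \D^\square$, which sends $z$ to the coherent square that is $z$ at $(1,1)$ and zero elsewhere.

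For $C \circ C_1$, the key observation is that unwinding \autoref{con:cone-1-2} identifies $C_1 \colon \D^\square \to \D^{[1]}$ as a shifted morphism of $\cof \colon \D^{[1]} \to \D$: with the identification $\D^\square \cong (\D^{[1]})^{[1]}$ in which the inner $[1]$ is the direction along which $C_1$ forms cofibers, $C_1$ is simply $\cof$ applied pointwise. Since shifting a morphism of derivators along a fixed category preserves its adjoints, and since $\cof$ has right adjoint $1_! \colon \D \to \D^{[1]}$ sending $z$ to the morphism $0 \to z$ by \cite[Prop.~3.20]{groth:ptstab}, the morphism $C_1$ has right adjoint the shifted functor $(1_!)^{[1]} \colon \D^{[1]} \to \D^\square$, which sends $h \colon a \to b$ to a coherent square whose two non-zero entries form $h$ and whose remaining two entries are zero. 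Composing the right adjoints in reverse order, the right adjoint of $C \circ C_1$ evaluates on $z \in \D$ by first producing $1_!(z) = (0 \to z)$ and then applying $(1_!)^{[1]}$, yielding the coherent square whose only non-zero entry is $z$ at $(1,1)$. This is exactly $(1,1)_!(z)$. Hence $\tcof$ and $C \circ C_1$ share the same right adjoint and are canonically isomorphic, which together with \autoref{prop:cones-commute} completes the chain of the theorem.

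The main bookkeeping obstacle is verifying that $C_1$ really is the shifted cofiber morphism — this follows from unwinding \autoref{con:cone-1-2} and standard identifications of iterated Kan extensions along products of finite posets — and ensuring that the right-adjoint shifts assemble correctly under the identification $\square = [1] \times [1]$. Once the orientation conventions are fixed consistently, the remaining manipulation of adjoints is routine. A purely diagrammatic alternative would construct a single coherent diagram on $\square \times [1]$ or $[2] \times [2]$ in which both $\tcof(X)$ and $C(C_1(X))$ appear as distinguished entries and match via iterated composition/cancellation of cocartesian squares, but the adjoint-functor approach is considerably cleaner.
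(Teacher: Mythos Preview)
Your argument is correct and takes a genuinely different route from the paper's main proof. The paper proceeds diagrammatically: it constructs an explicit coherent diagram (\autoref{fig:tcof-CC}) through a chain of six fully faithful Kan extensions, and then uses composition and cancellation of cocartesian squares to identify a single object $c(X)$ simultaneously as $\tcof(X)$ and as $C(C_1(X))$. You instead argue by uniqueness of right adjoints, showing that both $\tcof$ and $C\circ C_1$ are left adjoint to the extension-by-zero morphism $(1,1)_!\colon\D\to\D^\square$; this is exactly the alternative proof the paper sketches in the remark immediately following the theorem, phrased there in terms of exceptional inverse image morphisms. Your approach is shorter and more conceptual, and the pseudo-functoriality $(1\times\id)_!\circ 1_!\cong(1,1)_!$ makes the bookkeeping routine once the orientation of $\square=[1]\times[1]$ is fixed. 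The paper's diagrammatic proof, by contrast, is more laborious but more explicit: it produces the comparison isomorphism inside a single coherent diagram and, as the subsequent remark indicates, yields finer information such as the coherent square relating $y'$, $Cf'$, $Cg'$, and $\tcof(X)$.

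One small notational slip: you write ``$\cof\colon\D^{[1]}\to\D$'' with right adjoint $1_!$, but in the paper's conventions $\cof\colon\D^{[1]}\to\D^{[1]}$ has right adjoint $\fib$; it is the cone $C=1^\ast\circ\cof\colon\D^{[1]}\to\D$ that satisfies $C\dashv 1_!$ (see \autoref{rmk:C-inverse}). This does not affect the argument, since throughout you clearly intend~$C$.
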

\begin{proof}
Let $X\in\D^\square$ be a square looking like~\eqref{eq:square-appl}. We show that suitable combinations of Kan extensions can be used to construct a coherent diagram as in \autoref{fig:tcof-CC}. To this end, we denote by $[n],n\geq 0,$ the poset $(0<1<\ldots<n)$ considered as a category. Let $B\subseteq[1]\times[2]\times[2]$ be the full subcategory given by that figure, in which the third coordinate is drawn vertically. There is a fully faithful functor
$i\colon\square\to B$ given by 
\[
(0,0)\mapsto(0,0,0),\quad (1,0)\mapsto(1,0,0),\quad (0,1)\mapsto(0,1,0),\quad (1,1)\mapsto(1,2,0).
\]
As we describe next, this functor factors as a composition of fully faithful functors
\[
i\colon \square\stackrel{i_1}{\to}B_1\stackrel{i_2}{\to}B_2\stackrel{i_3}{\to}B_3\stackrel{i_4}{\to}B_4\stackrel{i_5}{\to}B_5\stackrel{i_6}{\to}B,
\]
where all intermediate categories are obtained by adding certain objects to the image of~$i$. The corresponding Kan extension morphisms
\[
\D^\square\stackrel{(i_1)_!}{\to}\D^{B_1}\stackrel{(i_2)_!}{\to}\D^{B_2}\stackrel{(i_3)_\ast}{\to}\D^{B_3}\stackrel{(i_4)_!}{\to}\D^{B_4}\stackrel{(i_5)_\ast}{\to}\D^{B_5}\stackrel{(i_6)_!}{\to}\D^B
\]
then build the desired diagram. Since these Kan extension morphisms are fully faithful \cite[Prop.~1.20]{groth:ptstab}, it suffices to understand the individual steps.
\begin{figure}
\begin{equation}
\vcenter{
\xymatrix@-1.5pc{
&&x\ar[rrr]^-f\ar[dl]_-g\ar[ddd]&&&y\ar[dl]^-{\tilde g}\ar[ddd]\\
&x'\ar[rrr]_-{\tilde f}\ar[dl]_-=\ar[ddd]&&&p\pushoutcorner\ar[dl]^-{\mathrm{can}}\ar[ddd]&\\
x'\ar[rrr]_-{f'}\ar[ddd]&&&y'\ar[ddd]&&\\
&&0_1\ar[rrr]\ar[dl]&&&Cf\ar[dl]^-\cong\ar[ddd]\\
&0_2\ar[rrr]\ar[dl]&&&C\tilde f\ar[dl]\ar[ddd]&\\
0_3\ar[rrr]&&&Cf'\ar[ddd]&&\\
&&&&&0_4\ar[dl]\\
&&&&0_5\ar[dl]&\\
&&&c(X).&&
}
}
\end{equation}
\caption{Total cofiber versus iterated cones}
\label{fig:tcof-CC}
\end{figure}
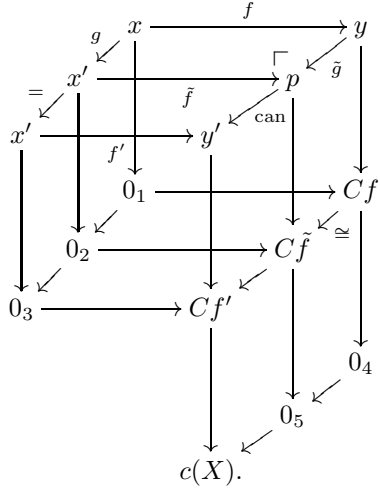
\begin{enumerate}
\item The category $B_1$ is obtained by adding the object $(1,1,0)$. Obviously, the functor $i_1\colon\square\to B_1$ is isomorphic to the functor $t$ in~\eqref{eq:t-s-squares}, and \autoref{prop:cocart-cocone} shows that $(i_1)_!$ adds a cocartesian square and the comparison map. 
\item The category $B_2$ also contains the object $(0,2,0)$ and is hence isomorphic to $[1]\times[2]$. To calculate the Kan extension morphism $(i_2)_!\colon\D^{B_1}\to\D^{B_2}$, we invoke the pointwise formula for Kan extensions and observe that the slice category under consideration is isomorphic to $[1]$. Since this category admits $1\in[1]$ as terminal object, we conclude by \cite[Lem.~1.19]{groth:ptstab} that $(i_2)_!$ essentially adds the identity morphism $\id_{x'}$ and forms the composition $f'=\mathrm{can}\circ \tilde f$.
\item The category $B_3$ is obtained by adding the objects $(0,0,1),(0,1,1),(0,2,1)$. Note that the inclusion $i_3\colon B_2\to B_3$ is isomorphic to 
\[
k\times\id\colon[1]\times[2]\to\ulcorner\times[2],
\]
where $k$ classifies the horizontal morphism $(0,0)\to(1,0)$. Since $k$ is a sieve, so is the functor $i_3$ and $(i_3)_\ast$ is hence right extension by zero \cite[Prop.~3.6]{groth:ptstab}. Thus, this right Kan extension morphism adds the zero objects $0_1,0_2,0_3$.
\item The category $B_4$ also contains the objects $(1,0,1),(1,1,1),(1,2,1)$, hence the inclusion $i_4$ is isomorphic to the functor
\[
i_\ulcorner\times\id\colon\ulcorner\times[2]\to\square\times[2].
\]
By the basic calculus of parametrized Kan extensions in derivators (\cite[Cor.~2.6]{groth:ptstab}), the morphism $(i_4)_!\colon\D^{B_3}\to\D^{B_4}$ hence forms three cocartesian squares. Applied to the essential image of the previous Kan extension functors, it thus forms three cofiber squares, thereby adding the objects $Cf,C\tilde f,$ and $Cf'.$ Since the square populated by $x\text{-}y\text{-}x'\text{-}p$ is cocartesian, the map $Cf\toiso C\tilde f$ is by \autoref{prop:cocart-map-on-C} an isomorphism.
\item The category $B_5$ is obtained by adding the objects $(1,0,2)$ and $(1,1,2)$. Since the inclusion $i_5\colon B_4\to B_5$ is a sieve, $(i_5)_\ast$ is right extension by zero \cite[Prop.~3.6]{groth:ptstab}) and it hence adds the zero objects $0_4,0_5$.
\item The final step adds the remaining object $(1,2,2)$, and it suffices to understand $(i_6)_!\colon\D^{B_5}\to\D^B$. A homotopy finality argument (based on the detection result \cite[Prop.~3.10]{groth:ptstab}) shows that $(i_6)_!$ precisely amounts to forming a cocartesian square $C\tilde f\text{-}Cf'\text{-}0_5\text{-}c(X)$. 
\end{enumerate}
This concludes the functorial construction of a diagram $Q=Q(X)\in\D^B$ as in \autoref{fig:tcof-CC}. The point of this diagram is that it allows us to identify $c(X)\in\D$ in the following two ways.
\begin{enumerate}
\item As already observed, in this diagram the square $C\tilde f\text{-}Cf'\text{-}0_5\text{-}c(X)$ is cocartesian, and we next show that the same is true for $p\text{-}y'\text{-}C\tilde f\text{-}Cf'$. To this end, we note that $x'\text{-}x'\text{-}0_2\text{-}0_3$ is cocartesian as a horizontally constant square \cite[Prop.~3.12]{groth:ptstab} and that the horizontally displayed squares are cofiber squares. Hence, the cancellation property of cocartesian squares \cite[Cor.~4.10]{gps:mayer} shows that $p\text{-}y'\text{-}C\tilde f\text{-}Cf'$ is cocartesian. Finally, the composition property of these squares (see \emph{loc.~cit.}) implies that also the square $p\text{-}y'\text{-} 0_5\text{-}c(X)$ is cocartesian, and this cofiber square shows that there is a canonical isomorphism $c(X)\cong C(p\to y')\cong\mathsf{tcof}(X)$.
\item Moreover, the square $Cf\text{-}C\tilde{f}\text{-}0_4\text{-}0_5$ is horizontally constant and hence cocartesian. Since $C\tilde f\text{-}Cf'\text{-}0_5\text{-}c(X)$ is also cocartesian, so is $Cf\text{-}Cf'\text{-}0_4\text{-}c(X)$, yielding canonical isomorphisms $c(X)\cong C(Cf\to Cf')\cong C(C_1(X))$.
\end{enumerate}
Taking these two steps together, we obtain a natural isomorphism $\tcof\cong C\circ C_1$, and by symmetry we obtain a similar natural isomorphism $\tcof\cong C\circ C_2$.
\end{proof}

\begin{rmk}
An alternative proof of \autoref{thm:total-cof} is obtained by identifying cones, partial cones, and total cofibers as \emph{exceptional inverse image morphisms} (\cite[\S3.1]{groth:ptstab}). As left adjoints to left Kan extension morphisms, these morphisms are compatible with compositions up to canonical isomorphism.
\end{rmk}

\begin{cor}
The following are equivalent for a square $X\in\D^\square$ in a stable derivator.
\begin{enumerate}
\item The square $X$ is bicartesian.
\item The partial cone $C_1X$ is an isomorphism.
\item The partial cone $C_2X$ is an isomorphism.
\item The total cofiber $\tcof(X)$ is trivial.
\item The partial fiber $F_1X$ is an isomorphism.
\item The partial fiber $F_2X$ is an isomorphism.
\item The total fiber $\tfib(X)$ is trivial.
\end{enumerate}
\end{cor}
\begin{proof}
By duality it suffices to show that the first four statements are equivalent. Since in stable derivators isomorphisms are detected by the triviality of the cone \cite[Prop.~4.5]{groth:ptstab}, the statements (ii),(iii), and (iv) are equivalent by \autoref{thm:total-cof}. Similarly, also the equivalence of (i) and (iv) is immediate from this and \autoref{prop:cocart-cocone}.
\end{proof}

\section{Stability and commuting (co)limits}
\label{sec:char}

In this section we obtain characterizations of pointed and stable derivators in terms of the commutativity of certain left and right Kan extensions. It turns out that a derivator is stable if and only if homotopy finite colimits and homotopy finite limits commute, and there are variants using suitable Kan extensions.  

We begin by collecting the following characterizations which already appeared in the literature.

\begin{thm}\label{thm:stable-known}
The following are equivalent for a pointed derivator \D.
\begin{enumerate}
\item The adjunction $(\Sigma,\Omega)\colon\D\rightleftarrows\D$ is an equivalence.
\item The derivator \D is $\Sigma$-stable, i.e., a square in \D is a suspension square if and only if it is a loop square.
\item The adjunction $(\cof,\fib)\colon\D^{[1]}\rightleftarrows\D^{[1]}$ is an equivalence.
\item The derivator \D is cofiber-stable, i.e., a square in \D is a cofiber square if and only if it is a fiber square.
\item The derivator \D is stable, i.e., a square in \D is cocartesian if and only if it is cartesian.
\item An $n$-cube in \D, $n\geq 2,$ is strongly cocartesian if and only if it is strongly cartesian.
\end{enumerate}
\end{thm}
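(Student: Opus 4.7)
Since this theorem collects characterizations that already appear in the literature, my plan is to assemble the chain of equivalences by invoking the relevant results from \cite{gps:mayer} and \cite{gst:basic} rather than reproving everything from scratch.

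First I would establish the core equivalences (i) $\Leftrightarrow$ (iii) $\Leftrightarrow$ (v), which form the main content of \cite{gps:mayer}. For (v) $\Rightarrow$ (iii), the strategy is to observe that if the classes of cocartesian and cartesian squares coincide, then for every morphism $f\in\D^{[1]}$ the cofiber square of $f$ is simultaneously a fiber square, forcing the unit and counit of $(\cof,\fib)$ to be isomorphisms. For the converse (iii) $\Rightarrow$ (v), I would use that a square is cocartesian iff it agrees with the cofiber square of its top edge (and dually for cartesian), so if $\cof$ and $\fib$ are mutually inverse then the two classes of squares must agree. The implication (iii) $\Rightarrow$ (i) is immediate by restricting the adjunction to morphisms with zero target (respectively, source), since $\cof(X\to 0)\cong \Sigma X$ and $\fib(0\to X)\cong \Omega X$. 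The reverse direction (i) $\Rightarrow$ (iii) is the delicate one: one reconstructs general cofibers from suspensions via the pasting calculus of cocartesian squares developed in \cite{groth:ptstab,gps:mayer}.

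Next, the equivalences (i) $\Leftrightarrow$ (ii) and (iii) $\Leftrightarrow$ (iv) are formal translations between the adjoint-equivalence formulation and the coincidence-of-square-classes formulation. A suspension square is by definition a cocartesian square with the two ``off-diagonal'' entries zero, and the statement that $(\Sigma,\Omega)$ is an equivalence unwinds, via the universal property of left Kan extension along $\ulcorner\into\square$ and its dual, to the claim that every such cocartesian square is also cartesian. The cofiber-fiber reformulation (iii) $\Leftrightarrow$ (iv) proceeds analogously.

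Finally, the equivalence (v) $\Leftrightarrow$ (vi) is established in \cite{gst:basic} by induction on $n$. The base case $n=2$ is (v) itself, and the inductive step exploits the characterization of strongly cocartesian $n$-cubes as left Kan extensions from the corresponding punctured cubes, which allows their analysis by pasting of bicartesian squares. I expect the main obstacle to lie precisely in this inductive step of (v) $\Rightarrow$ (vi): propagating the coincidence of bicartesian squares up through the full calculus of strongly (co)cartesian $n$-cubes requires the technical apparatus of \emph{loc.~cit.}, in particular a careful comparison of the universal properties defining strongly cocartesian and strongly cartesian cubes via iterated Kan extensions.
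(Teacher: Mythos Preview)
Your approach is essentially the same as the paper's: the theorem is a compilation of results from the literature, and the paper's proof consists entirely of the two citations \cite[Thm.~7.1]{gps:mayer} for the equivalence of (i)--(v) and \cite[Cor.~8.13]{gst:tree} for (v)$\Leftrightarrow$(vi). Your citation of \cite{gst:basic} rather than \cite{gst:tree} for the $n$-cube statement matches what the paper's introduction says, so this discrepancy is harmless.

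However, the extra detail you supply for (iii)$\Rightarrow$(v) contains a genuine error. You write that ``a square is cocartesian iff it agrees with the cofiber square of its top edge''; this is false, since a cofiber square has a zero object in the lower-left corner while a general cocartesian square need not. The equivalence $(\cof,\fib)$ only tells you directly about squares with a zero in one corner, so your proposed argument does not immediately yield the coincidence of \emph{all} cocartesian and cartesian squares. The actual argument in \cite{gps:mayer} is more indirect (roughly, one shows (iii)$\Rightarrow$(i) and then uses a separate argument to get from (i) back up to (v) via the pasting calculus). Since you are ultimately just citing \cite{gps:mayer} this does not invalidate your plan, but if you intend to keep the sketch you should correct or remove this step.
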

\begin{proof}
The equivalence of the first five statements is \cite[Thm.~7.1]{gps:mayer} and the equivalence of the remaining two is \cite[Cor.~8.13]{gst:tree}.
\end{proof}

As a preparation for a minor variant we include the following construction.

\begin{con}
In every pointed derivator \D there are canonical comparison maps
\begin{equation}\label{eq:sigma-f-c}
\Sigma F\to C\colon\D^{[1]}\to\D\qquad\text{\and}\qquad F\to \Omega C\colon\D^{[1]}\to\D.
\end{equation}
In fact, starting with a morphism $(f\colon x\to y)\in\D^{[1]}$ we can pass to the coherent diagram encoding both the corresponding fiber and cofiber square,
\[
\xymatrix{
Ff\ar[r]\ar[d]\pullbackcorner&x\ar[d]^-f\ar[r]&0\ar[d]\\
0\ar[r]&y\ar[r]&Cf.\pushoutcorner
}
\]
More formally, let $i\colon[1]\to\boxbar=[2]\times[1]$ classify the vertical morphism in the middle and let 
\[
i\colon [1]\stackrel{i_1}{\to}A_1\stackrel{i_2}{\to}A_2\stackrel{i_3}{\to}A_3\stackrel{i_4}{\to}\boxbar
\]
be the fully faithful inclusions which in turn add the objects $(2,0),(2,1),(0,1),$ and $(0,0)$. In every pointed derivator we can consider the corresponding Kan extension morphisms
\[
\D^{[1]}\stackrel{(i_1)_\ast}{\to}\D^{A_1}\stackrel{(i_2)_!}{\to}\D^{A_2}\stackrel{(i_3)_!}{\to}\D^{A_3}\stackrel{(i_4)_\ast}{\to}\D^\boxbar.
\]
The first two functors add a cofiber square and a homotopy cofinality argument (for example based on \cite[Prop.~3.10]{groth:ptstab}) shows that the remaining two morphisms add the fiber square. Forming the composite square, we obtain a coherent square looking like
\begin{equation}\label{eq:F-C-square}
\vcenter{
\xymatrix{
Ff\ar[r]\ar[d]&0\ar[d]\\
0\ar[r]&Cf.
}
}
\end{equation}
The canonical comparison maps \eqref{eq:sigma-f-c} are respectively special cases of the comparison maps from \autoref{prop:cocart-cocone} or its dual applied to the above square.
\end{con}

\begin{prop}\label{prop:stable-known-mod}
The following are equivalent for a pointed derivator \D.
\begin{enumerate}
\item The derivator \D is stable.
\item For every $f\in\D^{[1]}$ the canonical comparison maps $\Sigma F\to C$ and $F\to \Omega C$ as in \eqref{eq:sigma-f-c} are isomorphisms.
\item For every $f\in\D^{[1]}$ the square \eqref{eq:F-C-square} is bicartesian.
\end{enumerate}
\end{prop}
\begin{proof}
If \D is a stable derivator, then the composition property of bicartesian squares \cite[Prop.~3.13]{groth:ptstab} implies that \eqref{eq:F-C-square} is bicartesian, and it follows from \autoref{prop:cocart-cocone} that the canonical transformations $\Sigma F\to C$ and $F\to\Omega C$ are invertible. It remains to show that (ii) implies (iii), and we hence assume that $\Sigma F\toiso C$ is invertible. Associated to $x\in\D$ there is by \cite[Prop.~3.6]{groth:ptstab} the morphism $1_!(x)=(0\to x)\in\D^{[1]}$. The natural isomorphism $\Sigma F\toiso C$ evaluated at $1_!(x)$ yields a natural isomorphism $\Sigma\Omega x\toiso x$. Dually, we deduce $\id\toiso\Omega\Sigma$ and \autoref{thm:stable-known} concludes the proof.
\end{proof}

While unrelated left Kan extensions always commute \cite[Cor.~4.3]{groth:can-can}, it is, in general, not true that unrelated left and right Kan extensions commute. More specifically, given functors $u\colon A\to A'$ and $v\colon B\to B'$, recall that \textbf{left Kan extension along $u$ and right Kan extension along $v$ commute} in a derivator \D if the canonical mate
\begin{align}\label{eq:lkan-rkan-comm}
(u\times\id)_!(\id\times v)_\ast&\stackrel{\eta}{\to} (u\times\id)_!(\id\times v)_\ast (u\times\id)^\ast(u\times\id)_!\\
&\toiso (u\times\id)_!(u\times\id)^\ast(\id\times v)_\ast (u\times\id)_!\\
& \stackrel{\varepsilon}{\to} (\id\times v)_\ast (u\times\id)_!
\end{align}
is an isomorphism in \D. This is to say that the morphism $u_!\colon\D^A\to\D^{A'}$ preserves right Kan extensions along $v$ or that the morphism $v_\ast\colon\D^B\to\D^{B'}$ preserves left Kan extensions along $u$ \cite[Lem.~4.8]{groth:can-can}. For the purpose of a simpler terminology, we also say that $u_!$ and $v_\ast$ commute in \D. 

In general, the canonical mates \eqref{eq:lkan-rkan-comm} are not invertible as is for example illustrated by the following characterization of pointed derivators.

\begin{prop}\label{prop:ptd-comm}
The following are equivalent for a derivator \D.
\begin{enumerate}
\item The derivator \D is pointed.
\item Empty colimits and empty limits commute in \D.
\item Left Kan extensions along cosieves and right Kan extensions along sieves commute in \D.
\item[(iv.a)] Left Kan extensions along cosieves and arbitrary right Kan extensions commute in \D. 
\item[(iv.b)] Arbitrary left Kan extensions and right Kan extensions along sieves commute in \D.
\end{enumerate}
\end{prop}
\begin{proof}
For the equivalence of the first two statements we consider the empty functor $\emptyset\colon\emptyset\to\bbone$. Correspondingly, for every derivator \D there is the canonical mate
\[
\xymatrix{
\D^{\emptyset\times\emptyset}\ar[r]^-{(\id\times\emptyset)_\ast}\ar[d]_-{(\emptyset\times\id)_!}\drtwocell\omit{}&
\D^{\emptyset\times\bbone}\ar[d]^--{(\emptyset\times\id)_!}\\
\D^{\bbone\times\emptyset}\ar[r]_--{(\id\times\emptyset)_\ast}&\D^{\bbone\times\bbone}
}
\]
detecting if empty colimits and empty limits commute. By construction of initial and final objects in derivators (see \cite[\S1.1]{groth:ptstab}), the source of this canonical mate is given by initial objects in \D while the target is given by final objects. Hence, \D is pointed if and only if empty colimits and empty limits commute in \D.

Obviously, each of the statements (iv.a) or (iv.b) implies statement (iii). Moreover, since the empty functor is a sieve and a cosieve, statement (iii) implies (ii). By duality, it only remains to show that (i) implies (iv.a). Given a functor $u\colon A\to B$, the morphism $u_\ast\colon\D^A\to\D^B$ is a right adjoint and, as a pointed morphism of pointed derivators, $u_\ast$ hence preserves left Kan extensions along cosieves \cite[Cor.~8.2]{groth:can-can}.
\end{proof}

We now turn to the stable context. Let us recall that a category $A\in\cCat$ is \textbf{strictly homotopy finite} if it is finite, skeletal, and it has no non-trivial endomorphisms (equivalently the nerve $NA$ is a finite simplicial set). A category is \textbf{homotopy finite} if it is equivalent to a strictly homotopy finite category.

\begin{thm}\label{thm:stable-lim-I}
Homotopy finite colimits and homotopy finite limits commute in stable derivators.
\end{thm}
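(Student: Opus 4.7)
The plan is to derive the theorem from a more structural fact: left adjoint morphisms between stable derivators preserve homotopy finite limits. I would proceed in three steps.

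First, I would reformulate the theorem as a preservation statement. For homotopy finite $A$ and $B$ and $X \in \D^{A \times B}$, the commutation $(\pi_A)_!(\pi_B)_\ast X \iso (\pi_B)_\ast(\pi_A)_! X$ is precisely the assertion that the morphism $(\pi_A)_! \colon \D^A \to \D$ preserves the limit construction $(\pi_B)_\ast$. Thus the theorem is equivalent to saying that, for each homotopy finite $A$, the left adjoint $(\pi_A)_! \colon \D^A \to \D$ preserves homotopy finite limits, and by symmetry it suffices to establish this.

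Second, I would prove the key lemma: \emph{any left adjoint morphism $F \colon \sE \to \sF$ between stable derivators preserves homotopy finite limits}. The base cases are direct. Since $F$ is a left adjoint it preserves the initial object, which coincides with the terminal object by pointedness of $\sF$. Moreover $F$ preserves pullbacks: if $X$ is cartesian in $\sE$, then by stability $X$ is also cocartesian, whence $F(X)$ is cocartesian in $\sF$ (as left adjoints preserve colimits) and thus cartesian by stability of $\sF$. To extend from terminal objects and pullbacks to arbitrary homotopy finite limits, one inducts on the cell structure of the indexing category: any homotopy finite category is, up to weak equivalence, built from $\emptyset$ by finitely many attachments of simplicial cells $\partial \Delta^n \hookrightarrow \Delta^n$, which translate into a finite sequence of homotopy pushouts in $\cCat$. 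At the level of limit functors, each such step presents $\lim_B$ as an iterated pullback of simpler limits, so preservation by $F$ reduces to the two base cases. Here one uses the standard fact that the shifted derivator $\D^A$ is itself stable whenever $\D$ is, so that $(\pi_A)_!$ genuinely qualifies as a left adjoint between stable derivators.

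Third, applying the lemma to $F = (\pi_A)_!$ for any homotopy finite $A$ yields the commutation asserted in the theorem.

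The main obstacle I anticipate is the inductive cell decomposition in the second step. At the level of nerves this is the classical fact that every finite simplicial set is built from $\emptyset$ by finitely many cell attachments, but transferring it to a precise statement about limits indexed by homotopy finite \emph{categories} within a derivator requires care, typically via the Thomason model structure on $\cCat$ or a Bousfield--Kan style reduction that expresses $\lim_B$ as an iterated pullback when $B$ is a homotopy pushout of simpler pieces. Once that translation is in hand, the remainder of the argument is formal and uses only the defining bicartesian property of stability together with the general fact that left adjoints preserve colimits.
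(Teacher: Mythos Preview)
Your approach is correct and coincides with the paper's: the paper also reduces to the statement that $\colim_A\colon\D^A\to\D$ preserves homotopy finite limits, invoking \cite[Cor.~9.9]{groth:can-can} to see that $\colim_A$ is exact between stable derivators and then \cite[Thm.~7.1]{ps:linearity} to conclude preservation of homotopy finite limits. Your key lemma is precisely the conjunction of these two cited results, and your sketch (bicartesian squares for the base case, then an inductive cell decomposition of the homotopy finite indexing category) is exactly the content hidden behind those citations; you have correctly isolated the cell-decomposition step as the only nontrivial ingredient.
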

\begin{proof}
Let \D be a stable derivator and let $A$ be a homotopy finite category. Denoting by $\pi_A\colon A\to\bbone$ the unique functor, there are defining adjunctions 
\[
(\colim_A,\pi_A^\ast)\colon\D^A\rightleftarrows\D\qquad\text{and}\qquad (\pi_A^\ast,\mathrm{lim}_A)\colon\D\rightleftarrows\D^A,
\]
and these exhibit $\colim_A,\mathrm{lim}_A\colon\D^A\to\D$ as exact morphisms of stable derivators \cite[Cor.~9.9]{groth:can-can}. Hence, by \cite[Thm.~7.1]{ps:linearity}, $\colim_A$ preserves homotopy finite limits and $\lim_A$ preserves homotopy finite colimits.
\end{proof}

For the converse to this theorem we collect the following lemma. 

\begin{lem}\label{lem:lim-comm}
Let \D be a derivator such that homotopy finite colimits and homotopy finite limits commute in \D.
\begin{enumerate}
\item The derivator \D is pointed.
\item The morphisms $\cof\colon\D^{[1]}\to\D^{[1]}$ and $C\colon\D^{[1]}\to\D$ preserve homotopy finite limits.
\item The morphism $\fib\colon\D^{[1]}\to\D^{[1]}$ and $F\colon\D^{[1]}\to\D$ preserve homotopy finite colimits.
\end{enumerate}
\end{lem}
\begin{proof}
By assumption on \D, empty colimits and empty limits commute and this implies that \D is pointed (\autoref{prop:ptd-comm}). Hence, by duality, it remains to take care of the second statement. Denoting by $i\colon[1]\to\ulcorner$ the sieve classifying the horizontal morphism $(0,0)\to (1,0)$ and by $k'\colon[1]\to\square$ the functor classifying the vertical morphism $(1,0)\to (1,1)$, the cofiber morphism is given by
\begin{equation}\label{eq:cof}
\cof\colon\D^{[1]}\stackrel{i_\ast}{\to}\D^\ulcorner\stackrel{(i_\ulcorner)_!}{\to}\D^\square\stackrel{(k')^\ast}{\to}\D^{[1]}.
\end{equation}
Since the morphisms $i_\ast$ and $(k')^\ast$ are right adjoints, they preserve arbitrary right Kan extensions, hence homotopy finite limits. By assumption on \D, \cite[Prop.~3.9]{groth:can-can}, and \cite[Lem.~4.9]{groth:can-can}, also the morphism $(i_\ulcorner)_!$ preserves homotopy finite limits, and hence so does $\cof$ by \cite[Prop.~5.2]{groth:can-can}. An additional composition with the continuous evaluation morphism $1^\ast\colon\D^{[1]}\to\D$ establishes the corresponding result for $C$. 
\end{proof}

We recall the notation used in \autoref{con:cone-1-2}.

\begin{notn}\label{notn:cof-fib}
Let \D be a pointed derivator and let $X\in\D^\square$. Then we can form the cone in two different directions, leading to the morphisms $C_1(X),C_2(X)\in\D^{[1]}$. Similarly, we obtain $\cof_1(X),\cof_2(X)\in\D^\square$, and we use the corresponding notation for the dual constructions. 
\end{notn}

Given a pointed derivator \D and $X\in\D^\square$, since $\cof$ and $C$ are pointed morphisms of pointed derivators, there are canonical comparison maps
\begin{equation}\label{eq:cof-fib-comm}
\cof_1(\fib_2 X)\to\fib_2(\cof_1X)\qquad\text{and}\qquad C(F_2 X)\to F(C_1X);
\end{equation}
see \cite[Construction~9.7]{groth:can-can}.

\begin{cor}\label{cor:lim-comm}
Let \D be a derivator in which homotopy finite colimits and homotopy finite limits commute. Then \D is pointed and the canonical transformations \eqref{eq:cof-fib-comm} are isomorphisms for every $X\in\D^\square$.
\end{cor}
\begin{proof}
This is immediate from \autoref{lem:lim-comm}.
\end{proof}

As we show next, this property already implies that the derivator is stable. Together with \autoref{thm:stable-lim-I} we thus obtain the following more conceptual characterization of stability.

\begin{thm}\label{thm:stable-lim-II}
A derivator is stable if and only if homotopy finite colimits and homotopy finite limits commute.
\end{thm}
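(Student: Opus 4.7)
The forward implication is precisely \autoref{thm:stable-lim-I}, so I focus on the converse. Assume homotopy finite colimits and limits commute in $\D$. By \autoref{cor:lim-comm}, $\D$ is pointed and for every $X \in \D^\square$ the canonical maps $\cof_1(\fib_2 X) \to \fib_2(\cof_1 X)$ and $C(F_2 X) \to F(C_1 X)$ are isomorphisms; equivalently (\autoref{lem:lim-comm}), the cone morphism $C \colon \D^{[1]} \to \D$ preserves homotopy finite limits and, dually, $F \colon \D^{[1]} \to \D$ preserves homotopy finite colimits. By \autoref{prop:stable-known-mod}, to deduce stability it suffices to show that for every $f \in \D^{[1]}$ the canonical comparison maps $\Sigma F f \to Cf$ and $Ff \to \Omega Cf$ are isomorphisms, equivalently that the fiber-cofiber square \eqref{eq:F-C-square} is bicartesian.

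The main mechanism is that $C$ preserves pullbacks on $\D^{[1]}$. Applying $C$ to the fiber square of $f$ in $\D^{[1]}$, i.e., the cartesian square with corners $\Omega f$, $(0 \to 0)$, $(0 \to 0)$, $f$, produces a cartesian square in $\D$ with corners $C(\Omega f)$, $0$, $0$, $Cf$. Unwinding the pointwise formula for fibers, this square is the loop square of $Cf$, so one obtains a canonical isomorphism $C(\Omega f) \cong \Omega Cf$, and dually $F(\Sigma f) \cong \Sigma F f$. Specialising to $f = (x \to 0)$ already recovers the natural isomorphism $\Sigma \Omega x \cong \Omega \Sigma x$, but such an identity on its own is not enough to make the $(\Sigma,\Omega)$-adjunction an equivalence.

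The decisive step is to apply the full square-level commutation $\cof_1(\fib_2 X) \cong \fib_2(\cof_1 X)$ together with $C(F_2 X) \cong F(C_1 X)$ to the coherent $[2]\times[1]$-diagram whose left and right halves are the fiber and cofiber squares of $f$ and whose outer square is \eqref{eq:F-C-square}. Viewing each of these halves as a morphism in $\D^{[1]}$, the commutations tie the cone and fiber of the canonical comparisons $Ff \to \Omega Cf$ and $\Sigma F f \to Cf$ to auxiliary objects that are forced to vanish. Combining this vanishing with the triangle identities of the $(\Sigma,\Omega)$-adjunction and the naturality of the comparison maps should then upgrade vanishing of cones and fibers to honest invertibility, at which point \autoref{prop:stable-known-mod} completes the proof.

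I expect the hardest step to be this final upgrade. In a merely pointed derivator, neither a trivial cone nor a trivial fiber forces a morphism to be an isomorphism, so the argument must genuinely exploit the square-level commutation (and not just its pointwise consequences on individual morphisms) together with the adjunction structure. It is exactly this strengthening from pointwise to square-level statements that makes commutativity of homotopy finite (co)limits strong enough to characterise stability rather than only some weaker exactness property.
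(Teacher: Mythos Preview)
Your proposal has a genuine gap: you never finish the argument, and you yourself flag the ``final upgrade'' as the hard step you do not know how to carry out. The route via \autoref{prop:stable-known-mod} is unnecessarily indirect, and the vanishing-of-cones-and-fibers strategy you sketch runs into exactly the obstacle you name: in a merely pointed derivator a trivial cone does not force invertibility, so something more is needed, and you do not supply it.

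The paper avoids this detour entirely. Rather than trying to prove $\Sigma Ff\to Cf$ is invertible for \emph{all} $f$, it evaluates the square-level isomorphism $C(F_2X)\toiso F(C_1X)$ from \autoref{cor:lim-comm} at one well-chosen square, namely
\[
X=X(x)=(i_\lrcorner)_!\,\pi_\lrcorner^\ast x\in\D^\square,
\qquad
\vcenter{\xymatrix{0\ar[r]\ar[d]&x\ar[d]^-\id\\ x\ar[r]_-\id&x.}}
\]
A direct computation gives $F_2X\cong(\Omega x\to 0)$ and $C_1X\cong(x\to 0)$, hence $C(F_2X)\cong\Sigma\Omega x$ and $F(C_1X)\cong x$. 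Thus the canonical isomorphism yields $\Sigma\Omega\toiso\id$ immediately; the dual choice (left-extend a constant span by zero) gives $\id\toiso\Omega\Sigma$, and \autoref{thm:stable-known} finishes. The point you missed is that a single clever test object already extracts the unit/counit invertibility from the square-level commutation; there is no need to pass through \eqref{eq:F-C-square} or to argue about cones of comparison maps.
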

\begin{proof}
By \autoref{thm:stable-lim-I} it suffices to show that a derivator \D is stable as soon as homotopy finite colimits and homotopy finite limits commute in \D. Such a derivator is pointed and for every $X\in\D^\square$ the canonical morphism
\begin{equation}\label{eq:stable-lim-II}
C(F_2X)\toiso F(C_1X)
\end{equation}
is an isomorphism (\autoref{cor:lim-comm}). For every $x\in\D$ we consider the square
\[
X=X(x)=(i_\lrcorner)_!\pi_\lrcorner^\ast x\in\D^\square.
\]
The morphism $\pi_\lrcorner^\ast\colon\D\to\D^\lrcorner$ forms constant cospans. Since $i_\lrcorner\colon\lrcorner\to \square$ is a cosieve, $(i_\lrcorner)_!$ is left extension by zero \cite[Prop.~3.6]{groth:ptstab} and the diagram $X\in\D^\square$ looks like
\[
\xymatrix{
0\ar[r]\ar[d]&x\ar[d]^-\id\\
x\ar[r]_-\id&x.
}
\]
We calculate $CF_2(X)\cong C(\Omega x\to 0)\cong \Sigma\Omega x$ and $FC_1(X)\cong F(x\to 0)\cong x$, showing that the canonical isomorphism \eqref{eq:stable-lim-II} induces a natural isomorphism $\Sigma\Omega\toiso\id$. Using constant spans instead one also constructs a natural isomorphism $\id\toiso\Omega\Sigma$, showing that $\Sigma,\Omega\colon\D\to\D$ are equivalences. It follows from \autoref{thm:stable-known} that \D is stable.
\end{proof}

It is now straightforward to obtain the following variant of this theorem. We recall from \cite[\S9]{groth:can-can} that \textbf{left homotopy finite left Kan extensions} are left Kan extensions along functors $u\colon A\to B$ such that the slice categories $(u/b),$ $b\in B,$ admit a homotopy final functor $C_b\to(u/b)$ from a homotopy finite category~$C_b$. The point of this notion is that right exact morphisms of derivators preserve left homotopy finite left Kan extensions \cite[Thm.~9.14]{groth:can-can}.

\begin{thm}\label{thm:stable-lim-III}
The following are equivalent for a derivator \D.
\begin{enumerate}
\item The derivator \D is stable.
\item Homotopy finite colimits and homotopy finite limits commute in \D.
\item[(iii.a)] Left homotopy finite left Kan extensions and arbitrary right Kan extensions commute in \D. 
\item[(iii.b)] Arbitrary left Kan extensions and right homotopy finite right Kan extensions commute in \D.
\item[(iv.a)] Every left exact morphism $\D^A\to\D^B,A,B\in\cCat,$ preserves left homotopy finite left Kan extensions. 
\item[(iv.b)] Every right exact morphism $\D^A\to\D^B,A,B\in\cCat,$ preserves right homotopy finite right Kan extensions.
\item[(v)] The derivator \D is pointed and $C\colon\D^{[1]}\to\D$ preserves right homotopy finite right Kan extensions.
\item[(vi)] The derivator \D is pointed and $C\colon\D^{[1]}\to\D$ preserves homotopy finite limits.
\item[(vii)] The derivator \D is pointed and $C\colon\D^{[1]}\to\D$ preserves $F$.
\end{enumerate}
\end{thm}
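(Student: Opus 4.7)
The equivalence (i)$\Leftrightarrow$(ii) is already \autoref{thm:stable-lim-II}, so I would arrange the remaining equivalences around it in two clusters: the Kan-extension statements (iii)--(iv) and the cone-morphism statements (v)--(vii), each cluster being routed through (ii).

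For the Kan-extension cluster my plan is the chain (i)$\Rightarrow$(iv.a)$\Rightarrow$(iii.a)$\Rightarrow$(ii), together with its formal dual (i)$\Rightarrow$(iv.b)$\Rightarrow$(iii.b)$\Rightarrow$(ii). For (i)$\Rightarrow$(iv.a), observe that in a stable derivator every shifted derivator is stable, and every left exact morphism between stable derivators is automatically right exact: it preserves the zero object and pullbacks, and by \autoref{thm:stable-known} pullback squares agree with pushout squares, so pushouts are preserved as well. As a right exact morphism, it then preserves left homotopy finite left Kan extensions by \cite[Thm.~9.14]{groth:can-can}. For (iv.a)$\Rightarrow$(iii.a), each right Kan extension morphism $v_\ast$ is a right adjoint and hence left exact, so (iv.a) forces it to preserve left homotopy finite left Kan extensions, which unravels precisely to the commutativity in (iii.a). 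Finally, (iii.a)$\Rightarrow$(ii) is the specialization to the projections $\pi_A\colon A\to\bbone$ and $\pi_B\colon B\to\bbone$ with $A,B$ homotopy finite.

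For the cone-morphism cluster my plan is (ii)$\Rightarrow$(vi), (v)$\Leftrightarrow$(vi), (vi)$\Rightarrow$(vii), and (vii)$\Rightarrow$(i). The first is \autoref{lem:lim-comm}(ii). For (v)$\Leftrightarrow$(vi), homotopy finite limits are the special case $B'=\bbone$ of right homotopy finite right Kan extensions, while conversely the pointwise formula expresses any right homotopy finite right Kan extension, evaluated at a point, as a homotopy finite limit over a suitable homotopy final replacement of the slice category. For (vi)$\Rightarrow$(vii), the fiber morphism $F\colon\D^{[1]}\to\D$ is obtained (up to evaluation) as right Kan extension along the inclusion of $[1]$ into a cospan shape followed by evaluation, and so is a homotopy finite limit; hence $C$ preserves $F$ whenever it preserves homotopy finite limits.

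The main obstacle is (vii)$\Rightarrow$(i), which I would prove in the spirit of \autoref{thm:stable-lim-II}. For $x\in\D$ I form the square
\[
X(x)=(i_\lrcorner)_!\pi_\lrcorner^\ast x\in\D^\square,
\]
which has $0$ at the top-left corner and $x$ elsewhere, with identities between the three copies of $x$. A short direct computation then gives $C(F_2 X(x))\cong\Sigma\Omega x$ and $F(C_1 X(x))\cong x$, so hypothesis (vii) forces the canonical comparison between these to be invertible, yielding a natural isomorphism $\Sigma\Omega\toiso\id$. A dual construction, with $(i_\ulcorner)_\ast\pi_\ulcorner^\ast x$ in place of $(i_\lrcorner)_!\pi_\lrcorner^\ast x$, produces $\id\toiso\Omega\Sigma$. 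Together these show that $(\Sigma,\Omega)$ is an adjoint equivalence, and \D is stable by \autoref{thm:stable-known}. The delicate bookkeeping is to check that the canonical mate provided by the hypothesis ``$C$ preserves $F$'' really is the canonical comparison appearing in this explicit computation; I expect this to follow from naturality of mates but it is the point that requires the most care.
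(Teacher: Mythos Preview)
Your overall architecture matches the paper's: it also routes (i)$\Rightarrow$(iv.a)$\Rightarrow$(iii.a)$\Rightarrow$(ii) for the Kan-extension cluster and (i)$\Rightarrow$(v)$\Rightarrow$(vi)$\Rightarrow$(vii)$\Rightarrow$(i) for the cone cluster, with (vii)$\Rightarrow$(i) deferred to the proof of \autoref{thm:stable-lim-II} exactly as you do. Your detour (ii)$\Rightarrow$(vi) via \autoref{lem:lim-comm} and then (vi)$\Rightarrow$(v) via the pointwise formula is fine, though the paper's direct (i)$\Rightarrow$(v) is shorter: once \D is stable, $C$ is left exact and the dual of \cite[Thm.~9.14]{groth:can-can} applies immediately.

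There is, however, a mistake in your justification of (vi)$\Rightarrow$(vii). The fiber $F$ is \emph{not} a right Kan extension along an inclusion $[1]\to\lrcorner$. That inclusion (classifying, say, $(1,0)\to(1,1)$) is a cosieve, not a sieve, and right Kan extension along it places a copy of the target $y$, not a zero object, at the remaining vertex $(0,1)$. Dualizing \eqref{eq:cof}, the fiber is rather the composite
\[
\D^{[1]}\stackrel{j_!}{\to}\D^{\lrcorner}\stackrel{(i_\lrcorner)_\ast}{\to}\D^\square\stackrel{(0,0)^\ast}{\to}\D,
\]
where the first step is \emph{left} extension by zero along the cosieve $j$; so $F$ is not a homotopy finite limit. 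The implication (vi)$\Rightarrow$(vii) is nevertheless correct---and the paper simply records it as trivial---because $C$ is a left adjoint in any pointed derivator and hence automatically preserves $j_!$, restrictions are preserved by any morphism of derivators, and hypothesis (vi) then supplies preservation of the pullback $(i_\lrcorner)_\ast$.
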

\begin{proof}
If \D is stable, then also the shifted derivators $\D^A,A\in\cCat,$ are stable \cite[Prop.~4.3]{groth:ptstab}. Consequently, every left exact morphism $\D^A\to\D^B$ is also right exact \cite[Prop.~9.8]{groth:can-can} and it hence preserves left homotopy finite left Kan extensions \cite[Thm.~9.14]{groth:can-can}. This and a dual argument shows that statement (i) implies statements (iv.a) and (iv.b). Since right Kan extension morphisms are right adjoint morphisms and hence left exact, the implications (iv.a) implies (iii.a) and (iii.a) implies (ii) are immediate. Moreover, (ii) implies (i) by \autoref{thm:stable-lim-II}, and, by duality, it remains to incorporate the three final statements. Statement (i) implies statement (v) since $C$ is left exact in this case and it hence preserves right homotopy finite right homotopy Kan extensions \cite[Thm.~9.14]{groth:can-can}. The implications (v) implies (vi) and (vi) implies (vii) being trivial, it remains to show that (vii) implies (i) which is already taken care of by the proof of \autoref{thm:stable-lim-II}.
\end{proof}

The final characterization in \autoref{thm:stable-lim-III} is to be considered in contrast to \autoref{prop:cones-commute}. And, of course, there are various additional minor variants of the characterizations in \autoref{thm:stable-lim-III} obtained, for example, by replacing $C$ by the morphism $\cof\colon\D^{[1]}\to\D^{[1]}$. 

\begin{rmk}\label{rmk:interpretation}
A typical slogan is that spectra are obtained from pointed topological spaces if one forces the suspension to become an equivalence. This slogan is made precise by \autoref{thm:stable-known} and the fact that the derivator of spectra is the stabilization of the derivator of pointed topological spaces \cite{heller:stable}. \autoref{thm:stable-known} and \autoref{thm:stable-lim-III} make precise various additional slogans saying, for instance, that spectra are obtained from spaces or pointed spaces by forcing certain colimit and limit type constructions to commute. We illustrate this by two examples.
\begin{enumerate}
\item If one forces homotopy finite colimits and homotopy finite limits to commute in the derivator of spaces, then one obtains the derivator of spectra. 
\item If one forces partial cones and partial fibers of squares to commute in the derivator of pointed spaces, then this yields the derivator of spectra.
\end{enumerate}
\end{rmk}

\begin{rmk}\label{rmk:stable-rep-triv}
The phenomenon that certain colimits and limits commute is well-known from ordinary category theory. To mention an instance, let us recall that filtered colimits are exact in Grothendieck abelian categories, i.e., filtered colimits and finite limits commute in such categories. Additional such statements hold in locally presentable categories, Grothendieck topoi, and algebraic categories. 

Now, the phenomenon of stability is invisible to ordinary category theory; in fact, a represented derivator is stable if and only if the representing category is trivial (this follows from \autoref{thm:stable-known} since the suspension morphism is trivial in pointed represented derivators). As a consequence the commutativity statements in \autoref{thm:stable-lim-III} have no counterparts in ordinary category theory.
\end{rmk}

\section{A fun perspective on stability}
\label{sec:fun}

We conclude this paper by a characterization of pointedness and stability in terms of iterated adjoints to constant morphism morphisms.  Let us again begin with a related result in the case of pointed derivators. 

\begin{prop}\label{prop:char-ptd}
The following are equivalent for a derivator \D.
\begin{enumerate}
\item The derivator \D is pointed. 
\item The morphism $\emptyset_!\colon\D^\emptyset\to\D$ is a right adjoint.
\item The left Kan extension morphism $1_!\colon\D\to\D^{[1]}$ along the universal cosieve $1\colon\bbone\to[1]$ is a right adjoint.
\item For every cosieve $u\colon A\to B$ the left Kan extension morphism $u_!\colon\D^A\to\D^B$ is a right adjoint.
\item The morphism $\emptyset_\ast\colon\D^\emptyset\to\D$ is a left adjoint.
\item The right Kan extension morphism $0_\ast\colon\D\to\D^{[1]}$ along the universal sieve $0\colon\bbone\to[1]$ is a left adjoint.
\item For every sieve $u\colon A\to B$ the right Kan extension morphism $u_\ast\colon\D^A\to\D^B$ is a left adjoint.
\end{enumerate}
\end{prop}
\begin{proof}
By duality it suffices to show the equivalence of the first four statements. The implication (i) implies (iv) is \cite[Cor.~3.8]{groth:ptstab}. Since the empty functor $\emptyset\colon\emptyset\to\bbone$ is a cosieve it remains to show that (ii) or (iii) imply (i). The case of (ii) is taken care of by the proof of \cite[Cor.~3.5]{groth:ptstab}. In the remaining case, if $1_!$ is a right adjoint it preserves all limits and hence terminal objects. Since the terminal object in $\D([1])$ looks like $(\ast\to\ast)$, this has by \cite[Prop.~1.23]{groth:ptstab} to be isomorphic to $1_!(\ast)\cong(\emptyset\to\ast)$. Evaluating this isomorphism at $0$ shows that \D is pointed.
\end{proof}

These additional adjoint functors are sometimes referred to as \textbf{(co)exceptional inverse image functors} (see \cite[\S3]{groth:ptstab}).

\begin{rmk}\label{rmk:C-inverse}
In \cite{groth:ptstab} the cone $C\colon\D^{[1]}\to\D$ and the fiber $F\colon\D^{[1]}\to\D$ is defined in pointed derivators only, but the same formulas make perfectly well sense in arbitrary derivators. It turns out that a derivator is pointed if and only if $C$ is a left adjoint if and only if $F$ is a right adjoint. In that case, there are adjunctions $C\dashv 1_!$ and $0_\ast\dashv F$, exhibiting $C$ and $F$ as (co)exceptional inverse image functors; see \cite[Prop.~3.22]{groth:ptstab}. 
\end{rmk}

Let \D be a derivator and let $1\colon\bbone\to[1]$ classify the terminal object $1\in[1]$. In every derivator \D there are Kan extension adjunctions $(1_!,1^\ast)\colon\D\rightleftarrows\D^{[1]}$
and $(1^\ast,1_\ast)\colon\D\rightleftarrows\D^{[1]}$, and we hence have an adjoint triple
\[
1_!\dashv 1^\ast\dashv 1_\ast.
\]
Similarly, associated to the functor $0\colon\bbone\to[1]$ there is the adjoint triple
\[
0_!\dashv 0^\ast\dashv 0_\ast.
\]

\begin{prop}\label{prop:univ-sieve}
Let \D be a derivator and let $0,1\colon\bbone\to[1]$ classify the objects $0,1\in[1]$.
\begin{enumerate}
\item The morphisms $0_!,1_\ast\colon\D\to\D^{[1]}$ are fully faithful and induce an equivalence onto the full subderivator spanned by the isomorphisms. This equivalence is pseudo-natural with respect to arbitrary morphisms of derivators.
\item There are canonical isomorphism $0_!\cong \pi_{[1]}^\ast\cong 1_\ast\colon\D\to\D^{[1]}$.
\end{enumerate}
\end{prop}
\begin{proof}
Both morphisms $0_!$ and $1_\ast$ are fully faithful and the essential image consists precisely of the isomorphisms by \cite[Prop.~3.12]{groth:ptstab}. Since derivators are invariant under equivalences of prederivators, the subprederivator of isomorphisms is a derivator. The equivalence is pseudo-natural with respect to arbitrary morphisms since all morphisms preserve left Kan extensions along left adjoint functors (see \cite[Prop.~5.7]{groth:can-can} and \cite[Rmk.~6.11]{groth:can-can}). As for the second statement, there is an adjoint triple $0\dashv\pi_{[1]}\dashv 1$ and hence an induced adjoint triple $1^\ast\dashv \pi_{[1]}^\ast\dashv 0^\ast$. This yields canonical isomorphisms $1_\ast\cong\pi_{[1]}^\ast$ and $0_!\cong\pi_{[1]}^\ast$. 
\end{proof}

We refer to $\pi_{[1]}^\ast\colon\D\to\D^{[1]}$ as the \textbf{constant morphism morphism}.

\begin{cor}
In every derivator \D there is an adjoint $5$-tuple
\begin{equation}\label{eq:5tuple}
1_!\dashv 1^\ast\dashv \pi_{[1]}^\ast\dashv 0^\ast\dashv 0_\ast.
\end{equation}
\end{cor}
\begin{proof}
This is immediate from \autoref{prop:univ-sieve}.
\end{proof}

\begin{prop}
A derivator \D is pointed if and only if the adjoint $5$-tuple \eqref{eq:5tuple} extends to an adjoint $7$-tuple, which is then given by
\begin{equation}\label{eq:7tuple}
C\dashv 1_!\dashv 1^\ast\dashv \pi_{[1]}^\ast\dashv 0^\ast\dashv 0_\ast\dashv F.
\end{equation}
\end{prop}
\begin{proof}
This is immediate from \autoref{prop:char-ptd} and \autoref{rmk:C-inverse}.
\end{proof}

\begin{rmk}
While in any pointed derivator there is by \cite[Prop.~3.20]{groth:ptstab} an adjunction
\[
(\cof,\fib)\colon\D^{[1]}\rightleftarrows\D^{[1]},
\]
in pointed derivators the morphism $C$ is the sixth left adjoint of $F$. 
\end{rmk}

\begin{thm}\label{thm:stable-fun}
The following are equivalent for a pointed derivator.
\begin{enumerate}
\item The derivator is stable.
\item The adjoint $7$-tuple \eqref{eq:7tuple} extends to a doubly-infinite chain of adjoint morphisms.
\end{enumerate}
\end{thm}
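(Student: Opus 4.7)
The plan is to prove the two implications separately. For (ii)$\Rightarrow$(i), the argument should be a one-line reduction to \autoref{thm:stable-lim-III}: if the $7$-tuple \eqref{eq:7tuple} extends to a doubly-infinite chain of adjoints, then in particular $C\colon\D^{[1]}\to\D$ acquires a left adjoint, so $C$ is itself a right adjoint and therefore preserves all limits. In particular $C$ preserves the fiber morphism $F$, so condition (vii) of \autoref{thm:stable-lim-III} forces \D to be stable.

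The substantive direction is (i)$\Rightarrow$(ii). The key inputs are \autoref{prop:stable-known-mod}, which in the stable case provides natural isomorphisms $C\cong\Sigma F$ and $F\cong\Omega C$, together with the fact that in a stable derivator $\Sigma$ and $\Omega$ are mutually inverse equivalences, so that in particular $\Sigma\dashv\Omega$ and $\Omega\dashv\Sigma$. The first extension to the left is then immediate: $C\cong\Sigma F$ is a composite of right adjoints ($\Sigma$ right adjoint to $\Omega$, and $F$ right adjoint to $0_\ast$), hence $C$ itself is a right adjoint, with left adjoint $0_\ast\circ\Omega$. Dually, $F\cong\Omega C$ exhibits $F$ as a composite of left adjoints, with right adjoint $1_!\circ\Sigma$, extending the chain one step to the right.

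To iterate, I will observe that every morphism appearing in the extended chain decomposes as a composite of two kinds of building blocks: basic Kan extension morphisms between \D and $\D^{[1]}$, which all have both a left and a right adjoint (using \autoref{prop:univ-sieve} to identify $0_!\cong\pi_{[1]}^\ast\cong 1_\ast$, and dually $(\pi_{[1]})_!\cong 1^\ast$ and $(\pi_{[1]})_\ast\cong 0^\ast$), together with copies of the equivalences $\Sigma,\Omega$, each of which is both left and right adjoint to the other. Consequently every such composite again admits adjoints on both sides, computable via $(gf)^L=f^L g^L$ and its dual, so the iteration never stalls and the chain extends indefinitely in both directions. The main obstacle is largely clerical bookkeeping rather than conceptual: at each stage one must check that the new candidate adjoint genuinely is adjoint to its predecessor, but this follows immediately from composition of adjunctions. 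As a pleasant byproduct one observes a roughly $6$-periodic pattern, with each period introducing an additional shift by $\Sigma$ or $\Omega$, reflecting the stable-derivator analogue of periodicity of triangulated categories under their shift functor.
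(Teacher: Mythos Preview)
Your proposal is correct and follows essentially the same approach as the paper. For (ii)$\Rightarrow$(i) both you and the paper observe that $C$ becomes a right adjoint, hence preserves fibers, and conclude via \autoref{thm:stable-lim-III}; for (i)$\Rightarrow$(ii) both invoke \autoref{prop:stable-known-mod} to obtain $C\cong\Sigma F$ and $F\cong\Omega C$ and then extend the chain using that $\Sigma,\Omega$ are equivalences---the paper phrases this succinctly as ``the outer morphisms of the $7$-tuple match up to an equivalence, hence the chain extends with period six,'' which is exactly what your building-block iteration unwinds (note that your iteration does implicitly re-invoke $C\cong\Sigma F$ each time a copy of $C$ or $F$ reappears, since the left adjoint of $1_!$ is $C$ rather than a basic Kan extension, but your closing periodicity remark covers this).
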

\begin{proof}
Let \D be a pointed derivator such that \eqref{eq:7tuple} extends to such an infinite chain. The morphism $C\colon\D^{[1]}\to\D$ is then a right adjoint morphism of pointed derivators. As a left exact morphism of pointed derivators, $C$ preserves fibers \cite[Prop.~9.6]{groth:can-can}, and \autoref{thm:stable-lim-III} implies that \D is stable.

Conversely, let \D be a stable derivator. By \autoref{prop:stable-known-mod} there are natural isomorphisms
\[
\Sigma F\toiso C\qquad\text{and}\qquad F\toiso\Omega C.
\]
Since $\Sigma$ and $\Omega$ are equivalences in stable derivators (\autoref{thm:stable-known}), this shows that the outer morphisms in the adjoint $7$-tuple \eqref{eq:7tuple} match up to an equivalence. This implies that the adjoint $7$-tuple can be extended to a doubly-infinite chain of adjoint morphisms and that this chain has period six (in the obvious sense).
\end{proof}

We conclude by offering a first interpretation and visualization of this chain of morphisms.

\begin{rmk}
Let \D be a stable derivator. Then a few additional adjoint morphisms in the doubly-infinite sequence extending \eqref{eq:7tuple} are given by:
\[
\ldots\dashv\pi^\ast\Omega\dashv \Sigma 0^\ast\dashv 0_\ast\Omega\dashv C\dashv 1_!\dashv 1^\ast\dashv \pi^\ast\dashv 0^\ast\dashv 0_\ast\dashv F\dashv 1_!\Sigma\dashv \Omega 1^\ast\dashv \pi^\ast\Sigma\dashv\ldots
\]
In fact, this is immediate from the proof of \autoref{thm:stable-fun}.

In order to not get lost in all these morphisms, let us recall that Barratt--Puppe sequences in a stable derivator \D can be thought of as refinements of the more classical distinguished triangles. More precisely, associated to $(f\colon x\to y)\in\D^{[1]}$ there is the Barratt--Puppe sequence $BP(f)$ generated by $f$. This is a coherent diagram as in \autoref{fig:BP-sequence-stable} which vanishes on the boundary stripes and which makes all squares bicartesian.
\begin{figure}
\begin{equation}
\vcenter{
\xymatrix@-1pc{
\ar@{}[dr]|{\ddots}&\ar@{}[dr]|{\ddots}&\ar@{}[dr]|{\ddots}&&&&&&\\
\ar@{}[dr]|{\ddots}&\Omega Ff\ar[r]\ar[d]\pullbackcorner&\Omega x\ar[r]\ar[d]\pullbackcorner&0\ar[d]&&&&&\\
&0\ar[r]&\Omega y\ar[r]\ar[d]\pushoutcorner\pullbackcorner&Ff\ar[r]\ar[d]\pushoutcorner\pullbackcorner&0\ar[d]&&&&\\
&&0\ar[r]&x\ar[r]^-f\ar[d]\pushoutcorner\pullbackcorner&y\ar[r]\ar[d]\pushoutcorner\pullbackcorner&0\ar[d]&&&\\
&&&0\ar[r]&Cf\ar[r]\ar[d]\pushoutcorner\pullbackcorner&\Sigma x\ar[r]\ar[d]\pushoutcorner\pullbackcorner&0\ar[d]\ar@{}[rd]|{\ddots}&&\\
&&&&0\ar[r]\ar@{}[dr]|{\ddots}&\Sigma y\ar[r]\ar@{}[rd]|{\ddots}\pushoutcorner&\Sigma Cf\ar@{}[dr]|{\ddots}\pushoutcorner&\\
&&&&&&&&
}
}
\end{equation}
\caption{The Barratt--Puppe sequence of $f$}
\label{fig:BP-sequence-stable}
\end{figure}
(It turns out that $BP$ defines an equivalence of derivators (see \cite[Thm.~4.5]{gst:Dynkin-A}).)

Now, one half of the morphisms in the doubly-infinite chain simply amount to traveling in the Barratt--Puppe sequence in \autoref{fig:BP-sequence-stable}. If we imagine to sit on the morphism $f$ in $BP(f)$, then for every $n\geq 1$ an application of the $(2n\text{-}1)$-th left adjoint of $\pi^\ast$ to $f$ amounts to traveling $n$ steps in the positive direction. For low values this yields $y,Cf,\Sigma x,\Sigma y,$ and so on. There is a similar interpretation of the iterated right adjoints to $\pi^\ast$.

In order to obtain a similar visualization of the remaining adjoints, let us consider the Barratt--Puppe sequence $BP(\pi_{[1]}^\ast x), x\in\D$, of a constant morphism which then looks like \autoref{fig:BP-constant}.
\begin{figure}
\begin{equation}
\vcenter{
\xymatrix@-1pc{
\ar@{}[dr]|{\ddots}&\ar@{}[dr]|{\ddots}&\ar@{}[dr]|{\ddots}&&&&&&\\
\ar@{}[dr]|{\ddots}&0\ar[r]\ar[d]\pullbackcorner&\Omega x\ar[r]\ar[d]\pullbackcorner&0\ar[d]&&&&&\\
&0\ar[r]&\Omega x\ar[r]\ar[d]\pushoutcorner\pullbackcorner&0\ar[r]\ar[d]\pushoutcorner\pullbackcorner&0\ar[d]&&&&\\
&&0\ar[r]&x\ar[r]^-\id\ar[d]\pushoutcorner\pullbackcorner&x\ar[r]\ar[d]\pushoutcorner\pullbackcorner&0\ar[d]&&&\\
&&&0\ar[r]&0\ar[r]\ar[d]\pushoutcorner\pullbackcorner&\Sigma x\ar[r]\ar[d]\pushoutcorner\pullbackcorner&0\ar[d]\ar@{}[rd]|{\ddots}&&\\
&&&&0\ar[r]\ar@{}[dr]|{\ddots}&\Sigma x\ar[r]\ar@{}[rd]|{\ddots}\pushoutcorner&0\ar@{}[dr]|{\ddots}\pushoutcorner&\\
&&&&&&&&
}
}
\end{equation}
\caption{The Barratt--Puppe sequence of $\pi_{[1]}^\ast x$}
\label{fig:BP-constant}
\end{figure}
While $\pi^\ast$ points at the constant morphism in the middle of \autoref{fig:BP-constant}, for every $n$ the remaining $2n$-th adjoints to $\pi^\ast$ classify suitable iterated rotations of this morphism.
\end{rmk}

\bibliographystyle{alpha}
\bibliography{stability}

\end{document}